\documentclass{amsart}

\usepackage{url,color}

\setlength{\textwidth}{16cm} \setlength{\textheight}{21.5cm}
\setlength{\oddsidemargin}{0.0cm}
\setlength{\evensidemargin}{0.0cm}

\def\R{{\mathbb {R}}}
\def\N{{\mathbb {N}}}

\def\F{{\mathcal {F}}}

\def\D{{\mathcal {D}}}

\def\ve{\varepsilon}

\def\supp{\operatorname {\text{supp}}}

\newtheorem{teo}{Theorem}[section]
\newtheorem{lema}[teo]{Lemma}
\newtheorem{prop}[teo]{Proposition}
\newtheorem{corol}[teo]{Corollary}

\theoremstyle{remark}
\newtheorem{remark}[teo]{Remark}

\theoremstyle{definition}

\numberwithin{equation}{section}
\begin{document}
\parskip 3pt

\title[Fractional order Sobolev spaces in unbounded domains]{The concentration-compactness principle for fractional order Sobolev spaces in unbounded domains and applications to the generalized fractional  Brezis-Nirenberg problem.}

\author[J. Fern\'andez Bonder, Nicolas Saintier and A. Silva]{Juli\'an Fern\'andez Bonder, Nicolas Saintier and Anal\'ia Silva}

\address[Juli\'an Fern\'andez Bonder]{Departamento de Matem\'atica, FCEyN - Universidad de Buenos Aires
\hfill\break\indent Instituto de Matem\'atica Luis Santal\'o, IMAS - CONICET
\hfill\break \indent Ciudad Universitaria, Pabell\'on I (1428) Av. Cantilo s/n. \hfill\break \indent Buenos Aires, Argentina.}

\email{jfbonder@dm.uba.ar}

\urladdr{http://mate.dm.uba.ar/~jfbonder}

\address[Nicolas Saintier]{Departamento de Matem\'atica, FCEyN - Universidad de Buenos Aires
\hfill\break \indent Ciudad Universitaria, Pabell\'on I (1428) Av. Cantilo s/n. \hfill\break \indent Buenos Aires, Argentina.}

\email{nsaintie@dm.uba.ar}

\urladdr{http://mate.dm.uba.ar/~nsaintie}

\address[Anal\'ia Silva]{Departamento de Matem\'atica, FCFMyN, Universidad Nacional de San
Luis \hfill\break\indent Instituto de Matem\'atica Aplicada de San
Luis, IMASL, CONICET. \hfill\break\indent Italia avenue 1556, San Luis (5700), San Luis, Argentina.}

\email{acsilva@unsl.edu.ar}

\urladdr{https://analiasilva.weebly.com}

\subjclass[2010]{35R11, 46E25, 45G05}


\keywords{Concentration-compactness principle; unbounded domains; fractional elliptic-type problems}

\begin{abstract}
In this paper we extend the well-known concentration -- compactness principle for the Fractional Laplacian operator in unbounded domains. As an application we show sufficient conditions for the existence of solutions to some critical equations involving the fractional $p-$laplacian in the whole $\R^n$.
\end{abstract}

\maketitle
\section{Introduction.}
In recent years there has been an increasing amount of attention to problems involving nonlocal diffusion operators. These problems are so vast that it is impossible to give a comprehensive list of references.  Just to cite a few we refer to  \cite{DGLZ, Eringen, Giacomin-Lebowitz, Laskin, Metzler-Klafter, Zhou-Du} for some physical models, \cite{Akgiray-Booth, Levendorski, Schoutens} for some applications in finances, \cite{Constantin} for applications in fluid dynamics, \cite{Humphries, Massaccesi-Valdinoci, Reynolds-Rhodes} for application in ecology and \cite{Gilboa-Osher} for some applications in image processing.

The most emblematic non-local diffusion operator is probably  the so-called fractional Laplacian $(-\Delta)^s$, $0<s<1$, and its nonlinear generalization the fractional $p$-Laplacian $(-\Delta_p)^s$, $p>1$. The convenient functional framework for these operators are the fractional order Sobolev spaces. It is well-known that the usual Sobolev immersion theorem holds in the fractional setting, in particular when $sp<n$ where $n$ is the dimension of the ambient spaces, that the limiting exponent for the embedding into the Lebesgue space is $p^{*}_s = np/(n-sp)$. 
A challenging problem is then to provide sufficient conditions for the existence of a nontrivial solution to equations of the form 
\begin{equation}\label{MainEquation}
  (-\Delta_p)^s u =  h(x) |u|^{q-2}u + K(x)|u|^{p^{*}_s-2}u  
\end{equation} 
considered either in a bounded or unbounded subset of $\R^n$. 
In the case $s=1$ and $p=2$ we recover the famous Yamab\'e equation appearing in Riemannian geometry and studied by Aubin \cite{Aubin} and then by Brezis-Nirenberg \cite{BN}. 

The study of such critical equations relies on the study of the concentration phenomenon taking place when considering sequences of approximated solutions. The principle of concentration-compactness developed by Lions \cite{Lions} has proved to be a very useful tool. 
This principle was originally developed for local critical equation in bounded domains and was later extended to deal with local critical problem in unbounded domains by Chabrowski \cite{Chabrowski}. In the fractional setting such extension was recently obtained by Palatucci and Pisante \cite{PP} for $p=2$ and then by Mosconi et al for any $1<p<\tfrac{n}{s}$, see \cite{MPSY}, to deal with problems in bounded domains. 

The main contribution of this article is to obtain a concentration compactness principle in the fractional setting  suitable to deal with the possibility of loss of mass at infinity in the same spirit as Chabrowsky's paper \cite{Chabrowski} for the local case. 
We then apply this principle to obtain sufficient existence conditions for equations like \eqref{MainEquation} in all $\R^n$.

\medskip 

In order to state our results we need to recall some basic facts about fractional order Sobolev spaces. We refer to \cite{Triebel} and \cite{Hitchhicker} for more details. 

Given a function $v\in L^1_{loc}(\R^n)$, $0<s<1\le p<\infty$, we define its $(s,p)-$Gagliardo seminorm as
$$
[v]_{s,p}^p := \iint_{\R^n\times\R^n} \frac{|v(x)-v(y)|^p}{|x-y|^{n+sp}}\, dxdy.
$$
We denote $\D^{s,p}(\R^n)$ the closure of $C^{\infty}_c(\R^n)$ with respect to the Gagliardo seminorm $[v]_{s,p}$. 
Notice that this space can be also characterized as $\D^{s,p}(\R^n) = \{v\in L^{p^*_s}(\R^n)\colon [v]_{s,p}<\infty\}$.
We define the fractional $(s,p)-$gradient of a function $v\in \D^{s,p}(\R^n)$ as
\begin{equation}\label{sp.gradient}
|D^s v(x)|^p = \int_{\R^n}\frac{|v(x+h)-v(x)|^p}{|h|^{n+sp}}\,dh.
\end{equation}
Observe that this $(s,p)-$gradient is well defined a.e. in $\R^n$ and, moreover, $|D^s v|\in L^p(\R^n)$.

Throughout this paper, it will always be assumed that $sp<n$.
It is well known that for $v\in C^\infty_c(\R^n)$, $[v]_{s,p}<\infty$  the following {\em fractional order Sobolev inequality} holds (see, for instance \cite{Ponce-libro})
$$
\|v\|_{p^*_s} \le C [v]_{s,p},
$$
where $p^*_s = \frac{np}{n-sp}$ is the (critical) Sobolev exponent and, as usual, $\|v\|_q$ denotes the $L^q(\R^n)-$norm.
Obviously this inequality holds for any $v\in \D^{s,p}(\R^n)$. 
We can then consider the best constant in this inequality, namely 
\begin{equation}\label{S}
S:=\inf_{u\in \D^{s,p}(\R^N)}\frac{[v]^p_{s,p}}{\|v\|^p_{p^*_s}} 
= \inf_{u\in C^\infty_c(\R^n)} \frac{[v]^p_{s,p}}{\|v\|^p_{p^*_s}}.
\end{equation}

\medskip

The main result of this paper reads:

\begin{teo}\label{propCCP}
Let $\{u_k\}_{k\in\N}\subset \D^{s,p}(\R^n)$ be a weakly convergent sequence with weak limit $u$. 
 
Then there exist two bounded measures $\mu$ and $\nu$, an at most enumerable set of indices $I$,  and positive real numbers $\mu_i,\nu_i$, $i\in I$, such that the following convergence hold weakly in the sense of measures,
\begin{align}
& |D^s u_k|^{p}\,dx  \rightharpoonup \mu\ge |D^s u|^{p}\,dx + \sum_{i\in I} \mu_i \delta_{x_i},\label{Du}\\
& |u_k|^{p^*_s}\,dx \rightharpoonup \nu = |u|^{p^*_s}\,dx + \sum_{i\in I} \nu_i \delta_{x_i}, \label{u}\\
& S^\frac{1}{p}\nu_i^\frac{1}{p^*_s} \le \mu_i^\frac{1}{p}\qquad \text{for all }i\in I, \label{numu}
\end{align}
where $S = S(n,p,s)$ is Sobolev constant given by \eqref{S}. Moreover, if we define
\begin{align}
& \nu_\infty=\lim_{R\to\infty}\limsup_{k\to\infty}\int_{|x|>R}|u_k|^{p^*_s}\,dx,\label{nu.infinito}\\
& \mu_\infty=\lim_{R\to\infty}\limsup_{k\to\infty}\int_{|x|>R}|D^s u_k|^{p}\,dx,\label{mu.infinito} 
\end{align}
then
\begin{align}
& \limsup_{k\to\infty} \int_{\R^n} |D^s u_k|^{p}\,dx = \mu(\R^n) + \mu_\infty, \label{CCPinf1} \\
& \limsup_{k\to\infty} \int_{\R^n} |u_k|^{p^*_s}\,dx = \nu(\R^n) + \nu_\infty, \label{CCPinf2} \\
& S^\frac{1}{p}\nu_\infty^\frac{1}{p^*_s} \leq \mu_\infty^\frac{1}{p},  \label{CCPinf3}
\end{align}
\end{teo}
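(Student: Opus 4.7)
The first three conclusions \eqref{Du}--\eqref{numu} are the standard fractional concentration--compactness principle on $\R^n$, established for $p=2$ by Palatucci and Pisante~\cite{PP} and for $1<p<n/s$ by Mosconi et al.~\cite{MPSY}; I would invoke their results directly to produce $\mu,\nu$, the atoms $(x_i,\mu_i,\nu_i)_{i\in I}$, and the sub-Sobolev bound \eqref{numu}. The remaining content, modeled on Chabrowski~\cite{Chabrowski}, isolates the mass that escapes to infinity. Throughout I would fix a cutoff $\psi\in C^\infty(\R^n)$ with $0\le\psi\le 1$, $\psi\equiv 0$ on $B_1$, $\psi\equiv 1$ off $B_2$, and write $\psi_R(x):=\psi(x/R)$.

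For \eqref{CCPinf1}, I split
\[
\int_{\R^n}|D^s u_k|^p\,dx=\int_{\R^n}(1-\psi_R)|D^s u_k|^p\,dx+\int_{\R^n}\psi_R|D^s u_k|^p\,dx.
\]
Since $1-\psi_R\in C_c(\R^n)$, the weak-$*$ convergence $|D^s u_k|^p\,dx\rightharpoonup\mu$ yields $\int(1-\psi_R)|D^s u_k|^p\,dx\to\int(1-\psi_R)\,d\mu$ as $k\to\infty$, which in turn tends to $\mu(\R^n)$ as $R\to\infty$ by dominated convergence. The second integral is supported in $\{|x|\ge R\}$ and sandwiched between $\int_{|x|\ge 2R}|D^s u_k|^p\,dx$ and $\int_{|x|\ge R}|D^s u_k|^p\,dx$, so its $\lim_R\limsup_k$ equals $\mu_\infty$. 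This proves \eqref{CCPinf1}, and the same argument with $\nu$ replacing $\mu$ proves \eqref{CCPinf2}.

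For \eqref{CCPinf3} I apply the Sobolev inequality \eqref{S} to $u_k\psi_R\in\D^{s,p}(\R^n)$, giving $S^{1/p}\|u_k\psi_R\|_{p^*_s}\le[u_k\psi_R]_{s,p}$. Splitting $(u_k\psi_R)(x)-(u_k\psi_R)(y)=\psi_R(x)(u_k(x)-u_k(y))+u_k(y)(\psi_R(x)-\psi_R(y))$ and using the triangle inequality in $L^p(\R^{2n},|x-y|^{-n-sp}dxdy)$ yields, via Fubini,
\[
[u_k\psi_R]_{s,p}\le\Bigl(\int_{\R^n}\psi_R^p|D^s u_k|^p\,dx\Bigr)^{1/p}+\Bigl(\int_{\R^n}|u_k|^p|D^s\psi_R|^p\,dy\Bigr)^{1/p}.
\]
The first term is bounded by $(\int_{|x|\ge R}|D^s u_k|^p)^{1/p}$, whose $\limsup_k$ then $\lim_R$ is $\mu_\infty^{1/p}$; and the pointwise lower bound $\|u_k\psi_R\|_{p^*_s}^{p^*_s}\ge\int_{|x|\ge 2R}|u_k|^{p^*_s}$ gives $\nu_\infty^{1/p^*_s}$ in the same limit. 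The main obstacle, specific to the nonlocal setting, is to show that the commutator $\int|u_k|^p|D^s\psi_R|^p\,dy$ vanishes in the double limit. Scaling gives $|D^s\psi_R(y)|^p=R^{-sp}|D^s\psi(y/R)|^p$, and since $\psi$ is constant off $B_2$ the bound $|D^s\psi(y)|\le C|y|^{-(n+sp)/p}$ at infinity places $|D^s\psi|\in L^{n/s}(\R^n)$. For fixed $R$, the compact embedding $\D^{s,p}\hookrightarrow L^p_{\mathrm{loc}}$ combined with uniform tail control (H\"older with conjugate exponents $p^*_s/p$ and $n/(sp)$, using $\sup_k\|u_k\|_{p^*_s}<\infty$) lets me pass to the limit in $k$; a subsequent H\"older splitting of the $y$-integral on $\{|y|\le R^\alpha\}$ and $\{|y|>R^\alpha\}$ for any $\alpha\in(0,1)$ drives $R^{-sp}\int|u|^p|D^s\psi(y/R)|^p\,dy$ to $0$ as $R\to\infty$, using $u\in L^{p^*_s}(\R^n)$. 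Gathering the three bounds yields \eqref{CCPinf3}.
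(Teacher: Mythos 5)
Your proof is correct and follows the paper's overall scheme: for \eqref{CCPinf1}--\eqref{CCPinf2} the same annular cutoff, the same sandwich identification of $\mu_\infty,\nu_\infty$, weak-$*$ convergence against the compactly supported part, and dominated convergence in $R$; for \eqref{CCPinf3} the same Sobolev-plus-Minkowski decomposition of $[u_k\psi_R]_{s,p}$; and for \eqref{Du}--\eqref{numu} you cite \cite{PP,MPSY}, which the paper also sketches while acknowledging it as essentially due to \cite{MPSY}.

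The one place you genuinely depart is the key step $\lim_{R\to\infty}\int_{\R^n}|u|^p|D^s\psi_R|^p\,dx=0$. Your argument, combining H\"older with exponents $p^*_s/p$ and $n/(sp)$, the scale-invariance of $\| |D^s\psi_R|\,\|_{n/s}$, and a split at $|y|=R^\alpha$ (inner piece killed by $|D^s\psi_R|^p\lesssim R^{-sp}$, outer piece by the $L^{p^*_s}$-tail of $u$), is correct. It is in fact more robust than what the paper writes at this point: the paper asserts a bound $|D^s\phi_R(x)|^p\le C\,W(x)$ with $W(x)=\min\{1,|x|^{-(n+sp)}\}$ and $C$ \emph{independent of $R$}, and then invokes dominated convergence; but that uniform bound fails, since for $|x|$ of order $R$ one has $|D^s\phi_R(x)|^p\sim R^{-sp}$ while $W(x)\sim R^{-(n+sp)}$, so the ratio grows like $R^n$, and the true envelope $\sup_{R>1}|D^s\phi_R(x)|^p\lesssim\min\{1,|x|^{-sp}\}$ need not be $|u|^p$-integrable for a general $u\in L^{p^*_s}(\R^n)$. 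Your H\"older/scaling argument (equivalently, splitting at a fixed radius $M$, letting $R\to\infty$, then $M\to\infty$) avoids this issue entirely, so on this step your route is both different and preferable.
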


The proof of \eqref{Du}--\eqref{numu} can be easily deduced from the results in \cite{MPSY}. However we will include a more direct proof of this fact in order to make the paper self contained.

The main novelty here, as we mentioned above, is \eqref{CCPinf1}--\eqref{CCPinf3}. In order to achieve this, we follow the lines of the approach found in Chabrowski's paper \cite{Chabrowski}. However, some nontrivial technical difficulties appear due to the fact that in the local case, the gradient of a function with compact support also has compact support. In the nonlocal case, if $u\in C^\infty_c(\R^n)$ then $|D^s u|^p>0$ in $\R^n$. 

In order to overcome this difficulty, one needs to give an estimate of decay for the nonlocal gradient at infinity and, moreover, one also needs to prove a compact embedding result of $\D^{s,p}(\R^n)$ into $L^p$ with weights.

\medskip 

As an application of Theorem \ref{propCCP}, we obtain existence results for the critical problem
\begin{equation}\label{eqintro}
(-\Delta_p)^s u =  h(x) |u|^{q-2}u + K(x) |u|^{p^*_s - 2} u \qquad \text{ in }\R^n,
\end{equation} 
where $(-\Delta_p)^s$ is the so-called $p-$fractional Laplacian defined as
$$
(-\Delta_p)^s u(x) = p.v. \int_{\R^n} \frac{|u(x)-u(y)|^{p-2}(u(x)-u(y))}{|x-y|^{n+sp}}\, dy,
$$
where $p.v.$ stands for {\em in principal value}, and $p\le q < p^*_s$. 

In the local case $s=1$ this kind of equation have been the subject of an intense research activity since the seminal paper \cite{Aubin}. An exhaustive bibliography is almost imposible to establish. On the contrary, in the fractional setting $s<1$ much less is known though much effort have been dedicated very recently. Critical equation with the fractional Laplacian in bounded domains  have been considered in \cite{BCPS,CKL,S1,S2,SV1,SV2,Tan} when $p=2$ and in  \cite{MPSY} for a general $p$. 
Concerning critical equations in unbounded domain we are only aware of \cite{BM,BM2,ShangZhang}. These papers are  concerned with the linear case $p=2$. For general $p$ we only found \cite{Perera-Squassina-Yang}. Their results slightly overlaps with ours, however our approach allows us to treat more general problems.

Our existence results for equation \eqref{eqintro} in $\R^n$, are stated precisely in section 3 below.

\section{concentration compactness principle}

In the following we will need this two properties of the nonlocal $(s,p)-$gradient. The first one is a scaling property and the second one is a decay estimate for the nonlocal gradient of a function with compact support.

\begin{lema}\label{scaling}
Let $u\in \D^{s,p}(\R^n)$ and given $r>0$ and $x_0\in \R^n$ we define $u_{r,x_0}(x) = u(\tfrac{x-x_0}{r})$.

Then,
$$
|D^s u_{r,x_0}(x)|^p = \frac{1}{r^{sp}} |D^s u(\tfrac{x-x_0}{r})|^p.
$$
\end{lema}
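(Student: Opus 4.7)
The plan is a direct computation from the definition \eqref{sp.gradient}, using translation invariance and a single change of variables.

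First I would write out
\begin{equation*}
|D^s u_{r,x_0}(x)|^p = \int_{\R^n}\frac{|u_{r,x_0}(x+h)-u_{r,x_0}(x)|^p}{|h|^{n+sp}}\,dh = \int_{\R^n}\frac{\bigl|u\bigl(\tfrac{x-x_0}{r}+\tfrac{h}{r}\bigr)-u\bigl(\tfrac{x-x_0}{r}\bigr)\bigr|^p}{|h|^{n+sp}}\,dh.
\end{equation*}
Notice how the translation by $x_0$ disappears automatically: both copies of $u$ get the same shift, and the integration variable $h$ only sees the dilation.

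Next I would perform the change of variables $h = r\eta$ (so $dh = r^n\,d\eta$ and $|h|^{n+sp}=r^{n+sp}|\eta|^{n+sp}$) to obtain
\begin{equation*}
|D^s u_{r,x_0}(x)|^p = \int_{\R^n}\frac{\bigl|u\bigl(\tfrac{x-x_0}{r}+\eta\bigr)-u\bigl(\tfrac{x-x_0}{r}\bigr)\bigr|^p}{r^{n+sp}|\eta|^{n+sp}}\,r^n\,d\eta = \frac{1}{r^{sp}}\,|D^s u|^p\bigl(\tfrac{x-x_0}{r}\bigr),
\end{equation*}
which is exactly the claim.

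There is no real obstacle here; the only thing worth noting is that the a.e.\ well-definedness of $|D^s u|^p$ (recalled right after \eqref{sp.gradient}) is what allows the evaluation at $\tfrac{x-x_0}{r}$ to make sense pointwise a.e., and Fubini/measurability are not an issue since $u\in\D^{s,p}(\R^n)$. The lemma will then be used in the sequel together with the decay estimate mentioned in the introduction to handle the loss of mass at infinity in the concentration--compactness analysis.
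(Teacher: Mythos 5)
Your proof is correct and follows the same route as the paper: write out the definition of $|D^s u_{r,x_0}|^p$, observe the translation cancels, and perform the change of variables $h=r\eta$ to extract the factor $r^{-sp}$. The paper's proof is identical in substance; your extra remark about a.e.\ well-definedness is a reasonable but minor addition.
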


\begin{proof}
The proof is an immediate consequence of the change of variables formula. In fact,
\begin{align*}
|D^s u_{r,x_0}(x)|^p &= \int_{\R^n}\frac{|u_{r,x_0}(x+h)-u_{r,x_0}(x)|^p}{|h|^{n+sp}}\,dh \\
&=  \int_{\R^n}\frac{|u(\frac{x+h-x_0}{r})-u(\frac{x-x_0}{r})|^p}{|h|^{n+sp}}\,dh \\
&= \frac{1}{r^{sp}}  \int_{\R^n}\frac{|u(\frac{x-x_0}{r} + k)-u(\frac{x-x_0}{r})|^p}{|k|^{n+sp}}\,dk\\
&=  \frac{1}{r^{sp}} |D^s u(\tfrac{x-x_0}{r})|^p.
\end{align*}
This finishes the proof.
\end{proof}

Now we show the decay lemma. Recall that if a function has compact support, then its gradient also has compact support. However, this is not the case for the nonlocal $(s,p)-$gradient. What one actually obtain is a decay of this nonlocal gradient given by the decay of the fractional kernel. 
\begin{lema}\label{decay}
Let $v\in W^{1,\infty}(\R^n)$ be such that $\supp(v)\subset B_1(0)$. Then, there exists a constant $C>0$ depending on $n, s, p$ and $\|v\|_{1,\infty}$ such that
$$
|D^s v(x)|^p\le C\min\{1, |x|^{-(n+sp)}\}.
$$
\end{lema}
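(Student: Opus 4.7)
The plan is to establish two separate estimates: a uniform bound $|D^sv(x)|^p\le C$ valid for all $x$, and a decay bound $|D^sv(x)|^p\le C|x|^{-(n+sp)}$ valid for $|x|$ large. Combining them yields the minimum.

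For the uniform bound, I would split the defining integral at $|h|=1$. On $\{|h|\le 1\}$ I use the Lipschitz estimate $|v(x+h)-v(x)|\le \|\nabla v\|_\infty |h|$, so the integrand is bounded by $\|\nabla v\|_\infty^p|h|^{p-n-sp}=\|\nabla v\|_\infty^p|h|^{(1-s)p-n}$, which is integrable near the origin because $(1-s)p>0$. On $\{|h|>1\}$ I use the crude bound $|v(x+h)-v(x)|\le 2\|v\|_\infty$, which gives integrand $\le 2^p\|v\|_\infty^p|h|^{-n-sp}$, integrable at infinity since $sp>0$. This produces a constant depending only on $n,s,p,\|v\|_{1,\infty}$.

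For the decay bound, assume $|x|\ge 2$ (the remaining range $|x|<2$ is absorbed into the uniform bound by adjusting the constant). Since $x\notin \overline{B_1(0)}$ we have $v(x)=0$, so
\[
|D^sv(x)|^p=\int_{\R^n}\frac{|v(x+h)|^p}{|h|^{n+sp}}\,dh.
\]
The integrand is supported on $\{h\,:\,x+h\in B_1(0)\}=B_1(-x)$, and on this set $|h|\ge |x|-1\ge |x|/2$. Pulling the minimum of $|h|^{-(n+sp)}$ out of the integral yields
\[
|D^sv(x)|^p\le \|v\|_\infty^p\, |B_1(0)|\,\Bigl(\tfrac{|x|}{2}\Bigr)^{-(n+sp)}=C\,|x|^{-(n+sp)},
\]
with $C=C(n,s,p,\|v\|_\infty)$.

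The two estimates together give the claimed bound $|D^sv(x)|^p\le C\min\{1,|x|^{-(n+sp)}\}$. There is no real obstacle here; the only point requiring minimal care is the splitting of the $h$-integral in the uniform bound so that the Lipschitz regularity handles the singular kernel near $h=0$ while the $L^\infty$ bound handles the tail, and noting that the vanishing of $v$ off $B_1$ is what forces $h$ to stay far from the origin when $x$ is far away, giving the kernel-driven decay.
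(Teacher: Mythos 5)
Your proof is correct and follows essentially the same approach as the paper's: split the defining integral at $|h|=1$ for the uniform bound (Lipschitz regularity near $h=0$, $L^\infty$ bound for the tail), and for $|x|\ge 2$ observe that $v(x)=0$ and that the integrand is supported where $|h|\ge |x|/2$. No substantive differences.
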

\begin{proof}
Let us first obtain a global $L^\infty$ bound for $|D^s v|^p$. In fact,
$$
|D^s v(x)|^p = \left(\int_{|h|<1} + \int_{|h|\ge 1}\right) \frac{|v(x+h) - v(x)|^p}{|h|^{n+sp}}\, dh = I + II.
$$
These two integrals are bounded in the standard way:
$$
II \le C \int_{|h|\ge 1} \frac{1}{|h|^{n+sp}}\, dh = C
$$
and, using that $\|\nabla v\|_\infty <\infty$,
$$
I\le C \int_{|h|<1} \frac{1}{|h|^{n+sp-p}}\, dh = C.
$$

Now we consider the case where $|x|>2$ and obtain the desired decay. Observe first that $v(x)=0$ and so
$$
|D^s v(x)|^p = \int_{\R^n} \frac{|v(x+h)|^p}{|h|^{n+sp}}\, dh = \int_{|x+h|<1} \frac{|v(x+h)|^p}{|h|^{n+sp}}\, dh.
$$
Now, by a simple computation, it follows that $|h| \ge |x|-1 \ge \frac{|x|}{2}$ if $|x+h|<1$. Hence
$$
|D^s v(x)|^p \le \frac{C}{|x|^{n+sp}},
$$
as we wanted to show.
\end{proof}

Combining these lemmas \ref{scaling} and \ref{decay} we get the following
\begin{corol}\label{key.estimate}
Let $\phi\in W^{1,\infty}(\R^n)$ be such that $\supp(\phi) \subset B_1(0)$ and given $r>0$ and $x_0\in\R^n$ we define $\phi_{r,x_0}(x) = \phi(\tfrac{x-x_0}{r})$. Then
$$
|D^s\phi_{r,x_0}(x)|^p \le C \min\{r^{-sp}; r^n |x-x_0|^{-(n+sp)}\},
$$
where $C>0$ depends con $n,s,p$ and $\|\phi\|_{1,\infty}$.
\end{corol}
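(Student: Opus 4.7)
The plan is to combine the two preceding lemmas directly: scale to reduce $\phi_{r,x_0}$ to $\phi$, and then apply the decay estimate to $|D^s\phi|^p$. No further analysis is required; the content of the corollary is just the bookkeeping of powers of $r$.

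First I would apply Lemma \ref{scaling} with $u=\phi$ to obtain the exact identity
\[
|D^s\phi_{r,x_0}(x)|^p = \frac{1}{r^{sp}}\,\bigl|D^s\phi\bigl(\tfrac{x-x_0}{r}\bigr)\bigr|^p.
\]
Then, since $\phi\in W^{1,\infty}(\R^n)$ is supported in $B_1(0)$, Lemma \ref{decay} applies and gives
\[
\bigl|D^s\phi(y)\bigr|^p \le C\,\min\{1,\,|y|^{-(n+sp)}\}
\]
for all $y\in\R^n$, with $C$ depending only on $n$, $s$, $p$ and $\|\phi\|_{1,\infty}$.

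Substituting $y=(x-x_0)/r$ into the decay bound and multiplying by $r^{-sp}$, I get
\[
|D^s\phi_{r,x_0}(x)|^p \le \frac{C}{r^{sp}}\min\Bigl\{1,\ \Bigl|\tfrac{x-x_0}{r}\Bigr|^{-(n+sp)}\Bigr\} = C\,\min\Bigl\{r^{-sp},\ r^{-sp}\cdot r^{n+sp}|x-x_0|^{-(n+sp)}\Bigr\},
\]
and simplifying the second term yields $C\min\{r^{-sp},\,r^n|x-x_0|^{-(n+sp)}\}$, which is exactly the claimed bound.

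There is no real obstacle here: the only thing to be careful about is that the constant $C$ from Lemma \ref{decay} depends on $\|\phi\|_{1,\infty}$ (not on the scaled function $\phi_{r,x_0}$), which is why the corollary is stated with dependence on $\|\phi\|_{1,\infty}$ rather than on $\|\phi_{r,x_0}\|_{1,\infty}$. This is precisely why the scaling step is performed first: we apply the decay lemma to the fixed function $\phi$, and the rescaling is absorbed into the explicit powers of $r$ appearing in the bound.
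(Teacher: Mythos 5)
Your proof is correct and follows exactly the route the paper indicates: the corollary is stated immediately after the line ``Combining these lemmas \ref{scaling} and \ref{decay} we get the following,'' and your argument is precisely that combination, with the algebra carried out. The remark about why the constant depends on $\|\phi\|_{1,\infty}$ rather than on the rescaled function is a nice touch, but the substance matches the paper's intended (unwritten) proof.
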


Finally, we need a compactness lemma with weights.
\begin{lema}\label{compacidad.pesos}
Let $0<s<1<p$ be such that $sp<n$ and let $p\le q<p^*_s$.

Let $w\in L^\infty(\R^n)$ be such that there exists $\alpha>0$ and $C>0$ such that
$$
0\le w(x)\le C |x|^{-\alpha}.
$$
Then, if $\alpha>sq - n\frac{q-p}{p}$, $\D^{s,p}(\R^n) \subset\subset L^q(w\, dx; \R^n)$. That is, for any bounded sequence $\{u_k\}_{k\in\N}\subset \D^{s,p}(\R^n)$, there exists a subsequence $\{u_{k_j}\}_{j\in\N}\subset \{u_k\}_{k\in\N}$ and a function $u\in \D^{s,p}(\R^n)$ such that $u_{k_j}\rightharpoonup u$ weakly in $\D^{s,p}(\R^n)$ and
\begin{equation}\label{cota.peso}
\int_{\R^n} |u_{k_j}(x) - u(x)|^q\, w(x)dx \to 0\quad \text{as } j\to\infty. 
\end{equation}
\end{lema}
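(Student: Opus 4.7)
The plan is to combine the standard local compactness for fractional Sobolev functions on a large ball with a H\"older estimate that exploits the decay of $w$ to kill the tail.

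First I would extract, from the bounded sequence $\{u_k\}\subset\D^{s,p}(\R^n)$, a subsequence $\{u_{k_j}\}$ converging weakly to some $u\in\D^{s,p}(\R^n)$ by reflexivity; by the Sobolev inequality \eqref{S} this subsequence is also bounded in $L^{p^*_s}(\R^n)$. Fix $R>0$. The restrictions of $u_{k_j}$ to $B_R$ form a bounded family in $W^{s,p}(B_R)$, since $[u_{k_j}]_{s,p,B_R\times B_R}\le [u_{k_j}]_{s,p}$ and $\|u_{k_j}\|_{L^p(B_R)}$ is controlled by $\|u_{k_j}\|_{p^*_s}$ via H\"older on a bounded set. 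The standard compact embedding $W^{s,p}(B_R)\subset\subset L^q(B_R)$ for $q<p^*_s$, together with a Cantor diagonal argument in $R$, yields (after passing to a further subsequence) $u_{k_j}\to u$ strongly in $L^q_{\mathrm{loc}}(\R^n)$. Since $w\in L^\infty$, this gives
\[
\int_{B_R}|u_{k_j}-u|^q\,w(x)\,dx\longrightarrow 0\qquad\text{as }j\to\infty
\]
for every fixed $R>0$.

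For the tail I would apply H\"older with the conjugate exponents $p^*_s/q$ and $p^*_s/(p^*_s-q)$, obtaining
\[
\int_{|x|>R}|u_{k_j}-u|^q w(x)\,dx\le \|u_{k_j}-u\|_{p^*_s}^{\,q}\left(\int_{|x|>R}w(x)^{\frac{p^*_s}{p^*_s-q}}\,dx\right)^{\!\frac{p^*_s-q}{p^*_s}}.
\]
The first factor is uniformly bounded in $j$. Using $w(x)\le C|x|^{-\alpha}$, the tail integral of $w^{p^*_s/(p^*_s-q)}$ is finite and tends to $0$ as $R\to\infty$ precisely when $\alpha\, p^*_s/(p^*_s-q)>n$, and a short algebraic check using $p^*_s=np/(n-sp)$ shows that this is equivalent to the hypothesis $\alpha>sq-n(q-p)/p$. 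Hence the exterior contribution can be made arbitrarily small, uniformly in $j$, by choosing $R$ large, and combining with the interior estimate yields \eqref{cota.peso}.

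The only delicate point is the exponent arithmetic verifying that the given hypothesis on $\alpha$ is exactly the integrability threshold for $w^{p^*_s/(p^*_s-q)}$ at infinity; the rest is the routine split-into-ball-and-complement argument together with the well-known local fractional Rellich theorem, so I do not expect any deep obstacle.
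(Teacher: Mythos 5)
Your proposal is correct and follows essentially the same route as the paper: pass to a weakly convergent subsequence with strong $L^q_{\mathrm{loc}}$ convergence (via local fractional Rellich and a diagonal argument), split the integral at radius $R$, handle the ball using the $L^\infty$ bound on $w$ together with local strong convergence, and handle the tail by H\"older with exponents $p^*_s/q$ and its conjugate $(p^*_s/q)'=p^*_s/(p^*_s-q)$, using the decay $w\le C|x|^{-\alpha}$ to make the tail small uniformly in $j$; the exponent check $\alpha\,p^*_s/(p^*_s-q)>n \iff \alpha>sq-n(q-p)/p$ is exactly what the paper uses.
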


\begin{remark}
Observe that in the case $p=q$ we need $\alpha>sp$. So if $\phi\in W^{1,\infty}(\R^n)$ has compact support, then $w=|D^s\phi|^p$ verifies the hypotheses of Lemma \ref{compacidad.pesos} with $q=p$.
\end{remark}

\begin{proof}
From the reflexivity of $\D^{s,p}(\R^n)$, the Rellich-Kondrashov theorem and a standard diagonal argument, it follows that there exists $u\in \D^{s,p}(\R^n)$ and a subsequence (that we still denote by $\{u_k\}_{k\in\N}$) such that
\begin{align*}
& u_k \rightharpoonup u \quad \text{weakly in } \D^{s,p}(\R^n)\\
& u_k \to u \quad \text{strongly in } L^q_{\text{loc}}(\R^n).
\end{align*}

It remains to see \eqref{cota.peso}.

Take $R>0$ to be chosen, and compute
$$
\int_{\R^n} |u_{k_j}(x) - u(x)|^q\, w(x)dx = \left(\int_{|x|<R} + \int_{|x|\ge R}\right) |u_{k_j}(x) - u(x)|^q\, w(x)dx = I + II.
$$
Let us first bound $II$. To this end, we use H\"older's inequality and obtain
\begin{align*}
II&\le C \left(\|u_k\|_{p^*_s}^q + \|u\|_{p^*_s}^q\right)\left(\int_{|x|\ge R} w^{\left(\frac{p^*_s}{q}\right)'}\, dx\right)^{\frac{1}{\left(\frac{p^*_s}{q}\right)'}}\\
&\le C\left(\int_{|x|\ge R} w^{\left(\frac{p^*_s}{q}\right)'}\, dx\right)^{\frac{1}{\left(\frac{p^*_s}{q}\right)'}},
\end{align*}
where we have used Sobolev-Poincar\'e inequality in the last step.

Finally, we use our decay assumption on $w$ and obtain $\lim_{R\to\infty} II = 0$ uniformly on $k\in\N$.

So given $\ve>0$ we chose $R>0$ such that $II<\ve$ for any $k\in\N$.

Next, in order to bound $I$, we just use the $L^\infty$ bound on $w$ and obtain
$$
I \le \|w\|_{\infty} \|u_k-u\|_{q; B_R}^q \to 0 \quad \text{as } k\to\infty.
$$
All these estimates together imply that
$$
\limsup_{k\to\infty} \int_{\R^n} |u_{k_j}(x) - u(x)|^q\, w(x)dx \le \ve,
$$
for every $\ve>0$. The proof is completed.
\end{proof}

We are now in position of proving the concentration compactness principle.

\begin{proof}[Proof of Theorem \ref{propCCP}]

The proof of \eqref{Du}--\eqref{numu}, can be found, for instance, in \cite{MPSY}. However, in order to make the paper self contained, we make a short sketch of the proof. The strategy is the same as the one in the seminal paper of P.L. Lions \cite{Lions}. 

First we consider the case where $u=0$. In this case, we first show that the measures $\mu$ and $\nu$ verify a {\em reverse H\"older inequality}. In fact, given $\phi\in C^\infty_c(\R^n)$ we will prove that
\begin{equation}\label{RH}
S^\frac{1}{p}\left(\int_{\R^n} |\phi|^{p^*_s}\, d\nu\right)^\frac{1}{p^*_s} \le \left(\int_{\R^n} |\phi|^p\, d\mu\right)^\frac{1}{p}
\end{equation}
Hence, from \eqref{RH} it follows exactly as in \cite{Lions} that there exists a countable set $I$, points $\{x_i\}_{i\in I}\subset \R^n$ and positive weights $\{\nu_i\}_{i\in I}, \{\mu_i\}_{i\in I}\subset \R$ such that
$$
\nu = \sum_{i\in I} \nu_i \delta_{x_i},\qquad \mu\ge \sum_{i\in I} \mu_i \delta_{x_i}.
$$
From this particular case $u=0$, the general case can be deduced as in \cite{Lions} by using the classical Brezis-Lieb Lemma \cite[Theorem 1]{Brezis-Lieb}.

Hence, we need to show \eqref{RH} and the relation between the weights $\nu_i$ and $\mu_i$ given by \eqref{numu}.

To prove \eqref{RH}, observe that, given $\phi\in C^\infty_c(\R^n)$, applying the Sobolev inequality we get
$$
S^\frac{1}{p} \|\phi u_k\|_{p^*_s}\le \| D^s(\phi u_k)\|_p.
$$
By the definition of $\nu$, it follows that $\|\phi u_k\|_{p^*_s}\to \left(\int_{\R^n} |\phi|^{p^*_s}\, d\nu\right)^\frac{1}{p^*_s}$ as  $k\to\infty$.

For the right-hand-side, we observe that
\begin{align*}
\| D^s(\phi u_k)\|_p \le& \left(\iint_{\R^n\times\R^n}|\phi(x)|^p\frac{|u_k(x+h)-u_k(x)|^p}{|h|^{n+sp}}\,dh\,dx\right)^{\frac{1}{p}}\\
&+\left(\iint_{\R^n\times\R^n}|u_k(x+h)|^p\frac{|\phi(x+h)-\phi(x)|^p}{|h|^{n+sp}}\,dh\,dx\right)^{\frac{1}{p}}
\end{align*}
where we have used Minkowski's inequality.

Now, observe that, by a simple change of variables,
\begin{align*}
\iint_{\R^n\times\R^n}|u_k(x+h)|^p\frac{|\phi(x+h)-\phi(x)|^p}{|h|^{n+sp}}\,dh\,dx &= \iint_{\R^n\times\R^n}|u_k(y)|^p\frac{|\phi(y)-\phi(y+\hat h)|^p}{|\hat h|^{n+sp}}\,d\hat h\,dy\\
&= \int_{\R^n} |u_k(y)|^p |D^s\phi(y)|^p\, dy.
\end{align*}

Hence, we get
$$
\|D^s(u_k\phi)\|_p \leq \left(\int_{\R^n} |\phi(x)|^p|D^s u_k(x)|^p\,dx\right)^{\frac{1}{p}}+\left(\int_{\R^n} |u_k(x)|^p |D^s \phi(x)|^{p}\,dx\right)^{\frac{1}{p}}.
$$

Now, from Lemma \ref{decay}, the weight $w(x) := |D^s\phi(x)|^p$ satisfies the hypotheses of Lemma \ref{compacidad.pesos}, and hence $u_k\to 0$ strongly in $L^p(w)$. Therefore
$$
\limsup_{k\to\infty} \|D^s(\phi u_k)\|_p\le \left(\int_{\R^n} |\phi|^p\,d\mu\right)^{\frac{1}{p}}.
$$
This concludes the proof of the reverse H\"older inequality \eqref{RH}.

Now, to prove the relation between the weights $\nu_i$ and $\mu_i$ \eqref{numu}, we take $\phi\in C^\infty_c(\R^n)$ be such that $0\le \phi\le 1$, $\phi(0)=1$, $\supp\phi = B_1(0)$ and given $\ve>0$ we consider the rescaled functions $\phi_{i,\ve}(x) = \phi(\frac{x-x_i}{\ve})$.

Without loss of generality we may assume that $x_i=0$ and write $\phi_\ve = \phi_{i,\ve}$. Recall that from Corollary \ref{key.estimate} we have that
\begin{equation}\label{estimate.ep}
|D^s \phi_\ve(x)|^p \le C \min\{\ve^{-sp}; \ve^n |x|^{-(n+sp)}\}.
\end{equation}

Now, \eqref{estimate.ep} implies that $|D^s\phi_\ve|^p$ satisfies the hypotheses of Lemma \ref{compacidad.pesos}, therefore, arguing as in the proof of the reverse H\"older inequality \eqref{RH}, one arrives at
$$
S^\frac{1}{p}\left(\int_{\R^n} |\phi_\ve|^{p^*_s}\, d\nu\right)^\frac{1}{p^*_s} \le \left(\int_{\R^n} |\phi_\ve|^p\, d\mu\right)^\frac{1}{p} + \left(\int_{\R^n} |u|^p |D^s\phi_\ve|^p\, dx\right)^\frac{1}{p}.
$$
Now, $\int_{\R^n} |\phi_\ve|^{p^*_s}\, d\nu\ge \nu_i$ and $\int_{\R^n} |\phi_\ve|^p\, d\mu\le \mu(B_\ve(0)) \to \mu_i$  as $\ve\to 0$. Hence it remains to check that 
\begin{equation}\label{limite.dificil}
\int_{\R^n} |u|^p |D^s\phi_\ve|^p\, dx\to 0\quad \text{as } \ve\to 0.
\end{equation}

Two main difficulties arise now. One, once again, comes from the fact that the nonlocal gradient $|D^s\phi_\ve|^p$ does not has compact support. The second one, unlike the bounded domain case, is that $u$ does not belong to $L^p(\R^n)$. In order to overcome these difficulties, we use the precise rate of decay for $|D^s\phi_\ve|^p$ given by \eqref{estimate.ep}.

So,
$$
\int_{\R^n}|u|^p |D^s\phi_\ve|^p\, dx \le C\left(\ve^{-sp}\int_{|x|<\ve} |u|^p\, dx + \ve^n\int_{|x|\ge \ve} \frac{|u|^p}{|x|^{n+sp}}\, dx\right) = C (I + II).
$$

The first term is the easiest one,
$$
I\le \ve^{-sp} \left(\int_{|x|<\ve} |u|^{p^*_s}\, dx\right)^{\frac{p}{p^*_s}} |B_\ve|^{\frac{sp}{n}} = C \left(\int_{|x|<\ve} |u|^{p^*_s}\, dx\right)^{\frac{p}{p^*_s}}.
$$
Since $u\in L^{p^*_s}(\R^n)$ the last term goes to zero as $\ve\to 0$.

For the second term we proceed as follows,
\begin{align*}
II &= \sum_{k=0}^\infty \ve^n \int_{2^k\ve\le |x|\le 2^{k+1}\ve} \frac{|u|^p}{|x|^{n+sp}}\, dx\\
&\le  \sum_{k=0}^\infty \frac{1}{2^{k(n+sp)}} \frac{1}{\ve^{sp}} \int_{|x|\le 2^{k+1}\ve} |u|^p\, dx\\
&\le \sum_{k=0}^\infty \frac{1}{2^{k(n+sp)}} \frac{1}{\ve^{sp}} \left(\int_{|x|<2^{k+1}\ve} |u|^{p^*_s}\, dx\right)^{\frac{p}{p^*_s}} |B_{2^{k+1}\ve}|^\frac{sp}{n}\\
&= c \sum_{k=0}^\infty \frac{1}{2^{nk}} \left(\int_{|x|<2^{k+1}\ve} |u|^{p^*_s}\, dx\right)^{\frac{p}{p^*_s}},
\end{align*}
where $c$ depends only on $n,s,p$.

Now, given $\delta>0$, take $k_0\in\N$ such that $c\sum_{k=k_0+1}^\infty 2^{-nk} <\delta$. So
\begin{align*}
II &\le \|u\|_{p^*_s}^p \delta + c \sum_{k=0}^{k_0} \frac{1}{2^{nk}}  \left(\int_{|x|<2^{k_0+1}\ve} |u|^{p^*_s}\, dx\right)^{\frac{p}{p^*_s}}\\
&= \|u\|_{p^*_s}^p \delta + C(s,p,n,k_0)\left(\int_{|x|<2^{k_0+1}\ve} |u|^{p^*_s}\, dx\right)^{\frac{p}{p^*_s}}.
\end{align*}
Therefore, we obtain that $\limsup_{\ve\to 0} II \le  \delta\|u\|_{p^*_s}^p,$ for any $\delta>0$. This concludes the proof of \eqref{numu}.

It remains to see \eqref{CCPinf1}--\eqref{CCPinf3}. Consider a smooth function $\phi\colon [0,+\infty)\to [0,1]$ such that $\phi\equiv 0$ in $[0,1]$ and $\phi\equiv 1$ in $[2,+\infty)$. Then $\phi_R(x):=\phi(|x|/R)$ is smooth and satisfies $\phi_R(x)=1$ for $|x|\ge 2R$, $\phi_R(x)=0$ for $|x|\le R$ and $0\leq\phi_R(x)\leq1$. We then write that
\begin{equation}\label{mu1}
\int_{\R^n}|D^s u_k|^{p}\,dx = \int_{\R^n}|D^s u_k|^{p}\phi_R^p\,dx + \int_{\R^n}|D^s u_k|^{p}(1-\phi_R^p)\,dx.
\end{equation}
Observe first that
$$ 
\int_{|x|>2R}|D^s u_k|^{p}\,dx \leq \int_{\R^n} |D^s u_k|^{p}\phi_R^p\, dx \leq \int_{|x|>R} |D^s u_k|^p \, dx
$$
so that
\begin{equation}\label{mu2}
\mu_\infty = \lim_{R\to\infty}\limsup_{k\to\infty} \int_{\R^n} |D^s u_k|^p \phi_R^p\, dx.
\end{equation}
In the same way
\begin{equation}\label{nu2}
\nu_\infty = \lim_{R\to\infty}\limsup_{k\to\infty} \int_{\R^n} |u_k|^{p^*_s} \phi_R^{p^*_s}\, dx.
\end{equation}
On the other hand, since $1-\phi_R^p$ is smooth with compact support,
we have by definition of $\mu$ that for $R$ fixed,
$$ 
\lim_{k\to \infty}  \int_{\R^n} (1-\phi_R^p) |D^s u_k|^p\, dx = \int_{\R^n} (1-\phi_R^p)\, d\mu.  
$$

Since $\phi_R\to 0$ pointwise and $\mu$ is a finite nonnegative measure, it follows from the Dominated Convergence Theorem that $\lim_{R\to \infty}\int_{\R^n} \phi_R^p\,d\mu = 0$. Hence, we obtain
\begin{equation}\label{mu3}
\lim_{R\to\infty}\limsup_{k\to\infty} \int_{\R^n} (1-\phi_R^p) |D^s u_k|^p\, dx = \mu(\R^n).
\end{equation}
Plugging \eqref{mu2} and \eqref{mu3} into \eqref{mu1} yields
\eqref{CCPinf1}. The proof of \eqref{CCPinf2} is similar.

By definition of the Sobolev constant $S$, we know that
$$
S^\frac{1}{p}\|u_k \phi_R\|_{p^*_s} \leq \| D^s(u_k \phi_R)\|_p.
$$ 
As before, we have
$$
\| D^s(u_k \phi_R)\|_p\leq \left(\int_{\R^n} |\phi_R(x)|^p|D^s u_k(x)|^p\,dx\right)^{\frac{1}{p}}+\left(\int_{\R^n} |u_k(x)|^p |D^s \phi_R(x)|^{p}\,dx\right)^{\frac{1}{p}}.
$$

In order to finish the proof of the result, it remains to prove that
\begin{equation}\label{to0}
\lim_{R\to\infty} \lim_{k\to\infty} \int_{\R^n}|u_k(x)|^p|D^s\phi_R(x)|^{p}\,dx = 0.
\end{equation}

Once again, we use the precise rate of decay of this nonlocal gradient given by Corollary \ref{key.estimate} and use the compact embedding of $\D^{s,p}(\R^n)$ into a weighted $L^p$ space given by Lemma \ref{compacidad.pesos}.

In fact, from Lemma \ref{compacidad.pesos} we have that
$$
\lim_{k\to\infty} \int_{\R^n} |u_k|^p |D^s\phi_R|^p\, dx = \int_{\R^n} |u|^p |D^s\phi_R|^p\, dx.
$$ 

Now, let $W(x) := \min\{1; |x|^{-(n+sp)}\}$. Hence, from Corollary \ref{key.estimate} applied to $1-\phi_R$ there exists a constant $C>0$, independent of $R$, such that $|D^s\phi_R(x)|^p \le C W(x)$ for every $R>1$. Moreover, from Lemma \ref{compacidad.pesos} we have that $|u|^p W\in L^1(\R^n)$. Finally, observe that $|D^s\phi_R(x)|^p\to 0$ as $R\to\infty$. So, by the Dominated Convergence Theorem, we arrive at
\begin{equation}\label{limite.R}
\lim_{R\to\infty} \int_{\R^n} |u|^p |D^s\phi_R|^p\, dx = 0.
\end{equation}
The proof is finished.
\end{proof}

\section{Applications to critical equations with the fractional $p$-Laplacian in $\R^n$.}

In this section we use Theorem \ref{propCCP} to obtain some existence results for the equation
\begin{equation}\label{MainEqu}
 (-\Delta_p)^s u = \lambda h(x) |u|^{q-2}u + K(x)|u|^{p^{*}_s-2}u \qquad \text{ in }\R^n,
 \end{equation} 
 where $p\le q<p^*_s$.
 
We consider two cases.
\begin{enumerate}
\item $p<q<p^*_s$ and
\item $q=p$.
\end{enumerate}

For the first case, we impose the following assumptions on $h$ and $K$:
\begin{align}
&\tag{$h_1$} \label{h1} 0\le h\in L^1_{loc}(\R^n) \text{ is such that the immersion $\D^{s,p}(\R^n)\subset L^q(h\, dx; \R^n)$ is compact.}\\
&\tag{$K_1$} \label{K1} \text{The function $K$ is nonnegative, bounded, and has a limit at $\infty$} \\
&\notag \text{i..e. } K\in L^\infty(\R^n),\ K\ge 0,\ \text{ there exists } K(\infty):=\lim_{|x|\to+\infty}K(x).
\end{align}
Under these assumptions we have the following result:

\begin{teo}\label{aplicacion.1}
Let $0<s<1<p<q$ be such that $sp<n$ and $q<p^*_s$. Assume moreover that the functions $h$ and $K$ satisfy \eqref{h1} and \eqref{K1}.

Then, there exists $\lambda_0>0$ such that \eqref{MainEqu} has a nontrivial solution for any $\lambda>\lambda_0$.
\end{teo}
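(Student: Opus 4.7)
The plan is a mountain pass argument combined with Theorem~\ref{propCCP}. The natural energy functional associated with \eqref{MainEqu} is
$$
J_\lambda(u) := \frac{1}{p}[u]_{s,p}^p - \frac{\lambda}{q}\int_{\R^n} h(x)|u|^q\,dx - \frac{1}{p^*_s}\int_{\R^n} K(x)|u|^{p^*_s}\,dx,
$$
whose critical points in $\D^{s,p}(\R^n)$ are weak solutions of \eqref{MainEqu}. Since $p<q<p^*_s$, hypotheses \eqref{h1} and \eqref{K1} together with \eqref{S} readily imply the mountain pass geometry (a positive barrier on a small sphere, and $J_\lambda(tv)\to -\infty$ as $t\to\infty$ for any nontrivial $v\in C^\infty_c(\R^n)$ with $\int h|v|^q\,dx>0$), so one obtains a Palais--Smale sequence $\{u_k\}$ at a mountain pass level $c_\lambda>0$. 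The standard manipulation $J_\lambda(u_k)-\tfrac{1}{q}\langle J'_\lambda(u_k),u_k\rangle$ (using $q<p^*_s$) shows $\{u_k\}$ is bounded in $\D^{s,p}(\R^n)$.

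Next I would apply Theorem~\ref{propCCP} to extract a subsequence weakly convergent to some $u\in\D^{s,p}(\R^n)$, with atoms $\{(x_i,\mu_i,\nu_i)\}_{i\in I}$ and tails $\mu_\infty,\nu_\infty$. Because of \eqref{h1} the subcritical term is weakly continuous, so passing to the limit in $\langle J'_\lambda(u_k),\varphi\rangle=o(1)$ shows $u$ is a critical point of $J_\lambda$. To discard atomic concentration, I would test with $u_k\phi_{i,\ve}$ for the cutoff used in the proof of \eqref{numu}, let $k\to\infty$ and then $\ve\to 0$, and obtain $\mu_i\le K(x_i)\nu_i$; together with \eqref{numu} this yields the dichotomy $\nu_i=0$ or $\nu_i\ge(S/\|K\|_\infty)^{n/(sp)}$. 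The novel step is ruling out mass escaping to infinity: testing with $u_k\phi_R$ (the cutoff from the proof of \eqref{CCPinf1}--\eqref{CCPinf3}), invoking $K(x)\to K(\infty)$ from \eqref{K1}, and using Lemma~\ref{compacidad.pesos} to pass to the limit in the cross term $\int|u|^p|D^s\phi_R|^p\,dx$, one analogously obtains $\mu_\infty\le K(\infty)\nu_\infty$, hence by \eqref{CCPinf3} either $\nu_\infty=0$ or $\nu_\infty\ge(S/K(\infty))^{n/(sp)}$.

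If some $\nu_i$ or $\nu_\infty$ is nontrivial, the energy identity $c_\lambda=\lim_{k\to\infty}\bigl(J_\lambda(u_k)-\tfrac{1}{p^*_s}\langle J'_\lambda(u_k),u_k\rangle\bigr)\ge (\tfrac{1}{p}-\tfrac{1}{p^*_s})\max\{\mu_i,\mu_\infty\}$ forces $c_\lambda\ge c^*$ for an explicit threshold $c^*>0$ depending only on $n,s,p,S,\|K\|_\infty,K(\infty)$. It therefore suffices to ensure $c_\lambda<c^*$ for $\lambda$ large. This is the standard trick: fixing any $v\in C^\infty_c(\R^n)$ with $\int h|v|^q\,dx>0$, a direct computation on $t\mapsto J_\lambda(tv)$ gives $\max_{t\ge 0}J_\lambda(tv)\le C\lambda^{-p/(q-p)}$, so picking $\lambda_0$ large enough forces $c_\lambda<c^*$ for all $\lambda>\lambda_0$. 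All concentration then vanishes, so $\int K|u_k|^{p^*_s}\,dx\to\int K|u|^{p^*_s}\,dx$, and combined with \eqref{h1} this upgrades weak to strong convergence $u_k\to u$ in $\D^{s,p}(\R^n)$. Therefore $J_\lambda(u)=c_\lambda>0$ and $u\not\equiv 0$ is the desired solution of \eqref{MainEqu}. The main technical obstacle is controlling the mass at infinity, and it is precisely here that Theorem~\ref{propCCP} together with the decay estimate of Corollary~\ref{key.estimate} and the weighted compactness of Lemma~\ref{compacidad.pesos} are decisive.
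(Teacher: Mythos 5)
Your proposal is correct and follows essentially the same route as the paper: it sets up the mountain pass scheme for the functional $\F_\lambda$, applies Theorem~\ref{propCCP} together with the relations $\mu_i = K(x_i)\nu_i$ and $\mu_\infty = K(\infty)\nu_\infty$ (the paper establishes these as equalities in Lemma~\ref{acotada1}; your one-sided inequalities suffice) to derive the concentration threshold $c^* = \frac{s}{n}S^{n/(sp)}\|K\|_\infty^{1-n/(sp)}$, and then shows $\sup_{t>0}\F_\lambda(tv)\to 0$ as $\lambda\to\infty$ via the quantitative bound $t_\lambda = O(\lambda^{-1/(q-p)})$. The paper organizes this into Lemma~\ref{acotada1}, Proposition~\ref{PropPSCond}, and Theorem~\ref{teoMP} before giving the test-function estimate, whereas you compress the steps into a single argument, but the ideas and the key technical inputs (Corollary~\ref{key.estimate}, Lemma~\ref{compacidad.pesos}, the cutoff tests at each $x_i$ and at infinity) are identical.
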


This kind of result goes back to \cite{AzoreroPeral}. A result similar to ours in the linear case $p=2$ is given in \cite{ShangZhang}. 

\medskip

For the 2nd  case $q=p$ we  assume that $K(x) = 1$ and that $h$ verifies \eqref{h1} with $q=p$ and 
\begin{align}
&\tag{$h_2$}\label{h2} h\in L^\infty(\R^n) \text{ and there exists $x_0\in\R^n$ such that $h$ is continuous at $x_0$ and $h(x_0)>0$.}
\end{align}
Then, we have the following result:

\begin{teo}\label{aplicacion.2}
Let $0<s<1<p<\infty$ be such that $sp^2<n$. Assume moreover that the function $h$ satisfies \eqref{h1} with $q=p$ and \eqref{h2} and that $K(x)=1$.

Then, \eqref{MainEqu} has a nontrivial solution for any $0<\lambda<\lambda_1(h)$, where $\lambda_1(h)$ is given by
\begin{equation}\label{eq.lambda1}
\lambda_1(h):= \inf_{v\in \D^{s,p}(\R^n)} \frac{[v]_{s,p}^p}{\int_{\R^n} h(x) |v|^p\, dx}.
\end{equation}
\end{teo}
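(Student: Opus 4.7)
My plan is to seek a critical point of the energy functional
$$
J_\lambda(u) = \frac{1}{p}[u]_{s,p}^p - \frac{\lambda}{p}\int_{\R^n} h(x)|u|^p\,dx - \frac{1}{p^*_s}\int_{\R^n} |u|^{p^*_s}\,dx
$$
on $\D^{s,p}(\R^n)$ via the mountain pass theorem. The hypothesis $\lambda < \lambda_1(h)$ gives the coercivity bound $[u]_{s,p}^p - \lambda \int h|u|^p \ge (1 - \lambda/\lambda_1(h))[u]_{s,p}^p$, which, together with the Sobolev inequality, yields the mountain pass geometry: $0$ is a strict local minimum of $J_\lambda$, and $J_\lambda(tu_0)\to -\infty$ as $t\to\infty$ for any fixed $u_0\ne 0$. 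An Ekeland-type argument then provides a Palais--Smale sequence $\{u_k\}$ at the mountain pass level
$$
c^* := \inf_{\gamma\in\Gamma}\max_{t\in[0,1]} J_\lambda(\gamma(t)) > 0.
$$

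The heart of the proof is the energy estimate $c^* < \frac{s}{n}S^{n/(sp)}$, the threshold below which the Palais--Smale condition will hold. Following the strategy of \cite{MPSY}, I take a minimizing family $U_\varepsilon$ for the Sobolev constant $S$, rescaled and translated so as to concentrate at the point $x_0$ where $h$ is continuous with $h(x_0)>0$, and I plug it into $\max_{t\ge 0} J_\lambda(tU_\varepsilon)$. The assumption $sp^2 < n$ is the fractional analogue of the classical Brezis--Nirenberg dimension restriction; it ensures that the lower-order contribution $\lambda\int h|U_\varepsilon|^p$ has the right order in $\varepsilon$ so that after expansion one picks up a strictly negative correction from the bare Aubin--Talenti value $\frac{s}{n}S^{n/(sp)}$, driven by $\lambda h(x_0)>0$.

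With this sub-criticality in hand, I analyze the PS sequence via Theorem \ref{propCCP}. Boundedness of $\{u_k\}$ in $\D^{s,p}(\R^n)$ follows from $J_\lambda(u_k)\to c^*$, $\langle J'_\lambda(u_k),u_k\rangle\to 0$ and the coercivity estimate. Extracting a subsequence $u_k \rightharpoonup u$ weakly and a.e., Theorem \ref{propCCP} gives measures $\mu,\nu$, atoms $\{(x_i,\mu_i,\nu_i)\}_{i\in I}$ and masses at infinity $\mu_\infty,\nu_\infty$. Testing $J'_\lambda(u_k)\to 0$ against cutoffs of the form $\phi_{x_i,\varepsilon}^p u_k$ and $\phi_R^p u_k$ and passing to the limit $k\to\infty$ followed by $\varepsilon\to 0$ (resp.\ $R\to\infty$) leads to $\mu_i = \nu_i$ for all $i\in I$ and $\mu_\infty = \nu_\infty$: the $h$-terms drop because the embedding $\D^{s,p}(\R^n)\hookrightarrow L^p(h\,dx)$ is compact by \eqref{h1}, the critical terms converge to $\nu_i$ and $\nu_\infty$ by definition, and the nonlocal gradient parts localize to $\mu_i$, $\mu_\infty$ thanks to Corollary \ref{key.estimate} and Lemma \ref{compacidad.pesos}. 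Combining with \eqref{numu} and \eqref{CCPinf3} forces each $\nu_i$ and $\nu_\infty$ to be either $0$ or at least $S^{n/(sp)}$. On the other hand, the PS identity $\frac{sp}{n}\|u_k\|_{p^*_s}^{p^*_s} = pJ_\lambda(u_k) - \langle J'_\lambda(u_k),u_k\rangle \to pc^*$ combined with the mass decomposition of $\nu$ rules out any atom of mass $S^{n/(sp)}$, since $c^* < \frac{s}{n}S^{n/(sp)}$. Hence $u_k\to u$ strongly in $L^{p^*_s}(\R^n)$, and standard weak-to-strong passage for the fractional $p$-Laplacian operator shows $u$ is a weak solution of \eqref{MainEqu}. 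Finally, $u\ne 0$: if $u=0$ then $\int h|u_k|^p\to 0$ by \eqref{h1}, and the PS identity together with the Sobolev inequality applied to $\{u_k\}$ would again force either $c^*=0$ (contradicting $c^*>0$) or $c^*\ge \frac{s}{n}S^{n/(sp)}$ (contradicting the energy estimate).

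I expect the main obstacle to be the upper bound $c^* < \frac{s}{n}S^{n/(sp)}$. Unlike the case $p<q<p^*_s$ treated in Theorem \ref{aplicacion.1}, here the subcritical perturbation has the same homogeneity $p$ as the $(s,p)$-gradient, so the sign of the $\lambda h(x_0)$ correction depends on delicate cancellations in the expansion of $\max_{t\ge 0} J_\lambda(tU_\varepsilon)$. The required sharp decay estimates of the fractional extremals $U_\varepsilon$ and the precise control of the interaction with $h$ near $x_0$ are the technically heaviest part of the argument, and they are precisely what pins down the dimension assumption $sp^2 < n$.
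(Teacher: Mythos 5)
Your plan matches the paper's proof essentially step for step: mountain pass geometry from $\lambda<\lambda_1(h)$, the concentration-compactness analysis of PS sequences (Proposition \ref{PropPSCond}), and the energy estimate $\sup_{t>0}\F_\lambda(tU_\varepsilon)<\frac{s}{n}S^{n/(sp)}$ via the rescaled extremal $U_\varepsilon$ concentrating at $x_0$, with $sp^2<n$ entering exactly as you say to guarantee $U\in L^p(\R^n)$ (via the decay \eqref{c12}), so that $\int h\,U_\varepsilon^p=\varepsilon^{sp}h(x_0)\|U\|_p^p+o(\varepsilon^{sp})$ produces the strictly negative correction. The only cosmetic difference is that you rule out atoms directly from the identity $\frac{sp}{n}\|u_k\|_{p^*_s}^{p^*_s}\to pc^*$, whereas the paper packages this into the lower bounds \eqref{CotaInf1}--\eqref{CotaInf4} valid for general $K$; both are the same estimate.
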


\begin{remark}\label{lambda1}
Observe that our assumption \eqref{h1} implies that $\lambda_1(h)$ is well define and positive. In fact, it is the first eigenvalue of the problem
$$
(-\Delta_p)^s u = \lambda h(x) |u|^{p-2}u \qquad \text{in } \R^n.
$$
\end{remark}

The result of Theorem \ref{aplicacion.2} can be  generalized for nonconstant  $K$ under the assumption that $K$ reaches its maximum at some $x_0\in\R^n$ and  is flat enough near $x_0$. See Theorem \ref{aplicacion.2.5}.

As a final application we study the case where $q=p$ and $K$ reaches its maximum at $\infty$. For this case we impose
\begin{align}
&\tag{$K_\infty$}\label{K.infty} 
0\leq K\in L^\infty(\R^n),\ \|K\|_\infty = \lim_{|x|\to\infty} K(x) =: K(\infty) \text{ and } |K(x)-K(\infty)|\le \frac{C}{|x|^\beta}
\end{align}
for some $\beta>0$.

Moreover, since \eqref{h1} implies a decay of $h$ at infinity, we need a hypothesis to control this decay. So we assume
\begin{align}
&\tag{$h_\infty$}\label{h.infty} h(x)\ge \frac{A}{|x|^\gamma},\quad \text{with } \gamma<\frac{n}{p-1}.
\end{align}
for large values of $|x|$.

\begin{teo}\label{aplicacion.infty}
Let $0<s<1<p<\infty$ be such that $sp^2<n$. Assume moreover that the function $h$ satisfies \eqref{h1} with $q=p$ and \eqref{h.infty} and that $K$ verifies \eqref{K.infty} for some $\beta>\gamma$.

Then, \eqref{MainEqu} has a nontrivial solution for any $0<\lambda<\lambda_1(h)$, where $\lambda_1(h)$ is given by \eqref{eq.lambda1}.
\end{teo}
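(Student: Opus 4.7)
The plan is to combine a mountain-pass argument with Theorem \ref{propCCP}. Set
$$
J(u)=\frac{1}{p}[u]_{s,p}^p-\frac{\lambda}{p}\int_{\R^n}h|u|^p\,dx-\frac{1}{p^*_s}\int_{\R^n}K|u|^{p^*_s}\,dx,
$$
so that, by \eqref{h1} and the Sobolev inequality, $J\in C^1(\D^{s,p}(\R^n))$ and critical points of $J$ are weak solutions of \eqref{MainEqu}. Since $0<\lambda<\lambda_1(h)$, the definition of $\lambda_1(h)$ gives $[u]_{s,p}^p-\lambda\int h|u|^p\,dx\ge(1-\lambda/\lambda_1(h))[u]_{s,p}^p$, so the Sobolev inequality yields $J(u)\ge\alpha>0$ on a small sphere of $\D^{s,p}(\R^n)$; on the other hand $J(tv_0)\to-\infty$ for any fixed $v_0\ne 0$. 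Hence $J$ has the mountain-pass geometry, and the deformation lemma produces a Palais--Smale sequence $\{u_k\}$ at a level $c>0$, which is bounded in $\D^{s,p}(\R^n)$ by the standard argument.

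Passing to a subsequence $u_k\rightharpoonup u$ and applying Theorem \ref{propCCP} gives measures $\mu,\nu$ with atoms $\{(\mu_i,\nu_i)\}_{i\in I}$ at points $\{x_i\}\subset\R^n$, together with $\mu_\infty,\nu_\infty$. Testing $\langle J'(u_k),u_k\phi^p\rangle\to 0$ with a smooth cutoff $\phi$ concentrated near $x_i$ (and analogously, for the behavior at infinity, with $u_k\phi_R^p$ where $\phi_R$ is the cutoff used in the proof of Theorem \ref{propCCP}) yields $\mu_i\ge K(x_i)\nu_i$ and $\mu_\infty\ge K(\infty)\nu_\infty$. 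Combined with \eqref{numu}, \eqref{CCPinf3} and $K\le K(\infty)$, each nontrivial atom---interior or at infinity---therefore carries at least the energy
$$
c^*:=\frac{s}{n}\frac{S^{n/(sp)}}{K(\infty)^{(n-sp)/(sp)}}.
$$
Now the compactness in \eqref{h1} yields $\int h|u_k|^p\,dx\to\int h|u|^p\,dx$, and a direct computation gives
$$
c=\lim_{k\to\infty}\Bigl(J(u_k)-\tfrac{1}{p^*_s}\langle J'(u_k),u_k\rangle\Bigr)\ge\frac{s}{n}\Bigl([u]_{s,p}^p-\lambda\!\int h|u|^p\,dx+\sum_{i\in I}\mu_i+\mu_\infty\Bigr),
$$
and the bracketed term is nonnegative because $\lambda<\lambda_1(h)$. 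Thus, provided $c<c^*$, no atom survives, which combined with \eqref{u} and \eqref{CCPinf2} gives $u_k\to u$ in $L^{p^*_s}(K\,dx)$; a standard monotonicity argument for $(-\Delta_p)^s$ then upgrades this to strong convergence in $\D^{s,p}(\R^n)$, so $u$ is a critical point of $J$ with $J(u)=c>0$, hence nontrivial.

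The heart of the proof is therefore to secure $c<c^*$. Let $U$ be the extremizer for $S$, normalized by $[U]_{s,p}^p=S$ and $\|U\|_{p^*_s}=1$, with the standard decay $U(x)\sim|x|^{-(n-sp)/(p-1)}$ at infinity; the crucial assumption $sp^2<n$ ensures $U\in L^p(\R^n)$. Consider the translated rescaling
$$
v_{\ve,y}(x):=\ve^{-(n-sp)/p}U\bigl((x-y)/\ve\bigr),
$$
so that $[v_{\ve,y}]_{s,p}^p=S$ and $\|v_{\ve,y}\|_{p^*_s}=1$. A direct computation gives
$$
\max_{t\ge 0}J(tv_{\ve,y})=\frac{s}{n}\,\frac{\bigl(S-\lambda\!\int h|v_{\ve,y}|^p\,dx\bigr)^{n/(sp)}}{\bigl(\int K|v_{\ve,y}|^{p^*_s}\,dx\bigr)^{(n-sp)/(sp)}},
$$
so $\max_t J(tv_{\ve,y})<c^*$ reduces, after expansion, to showing that $\lambda\int h|v_{\ve,y}|^p\,dx$ dominates $K(\infty)-\int K|v_{\ve,y}|^{p^*_s}\,dx$ asymptotically. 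A change of variables together with \eqref{h.infty} and the finiteness of $\|U\|_p$ gives
$$
\int h|v_{\ve,y}|^p\,dx\ge c_1\,\ve^{sp}|y|^{-\gamma}\qquad(|y|\text{ large, }\ve/|y|\text{ small}),
$$
while $K\le K(\infty)$ combined with \eqref{K.infty} and the decay of $|U|^{p^*_s}$ yields
$$
0\le K(\infty)-\int K|v_{\ve,y}|^{p^*_s}\,dx\le C|y|^{-\beta}+o(1),
$$
the $o(1)$ coming from the tail $\int_{|z|>|y|/\ve}|U(z)|^{p^*_s}\,dz$. Choosing $\ve=|y|^{-\alpha}$ with $0<\alpha<(\beta-\gamma)/(sp)$ and letting $|y|\to\infty$ produces $\ve^{sp}|y|^{-\gamma}\gg|y|^{-\beta}$, so $\max_t J(tv_{\ve,y})<c^*$; this is precisely where $\beta>\gamma$ is indispensable. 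The main technical obstacle is exactly this estimate, because one must simultaneously guarantee a sharp enough lower bound for $\int h|v|^p$ (forcing $sp^2<n$ so that $\|U\|_p<\infty$) and sharply control the tail of $|U|^{p^*_s}$ in the expansion of $K(\infty)-\int K|v|^{p^*_s}$.
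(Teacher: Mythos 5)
Your argument takes essentially the same route as the paper: you reduce to the mountain-pass criterion (the content of Theorem~\ref{teoMP}), then exhibit the traveling bubble $v_{\ve,y}=U_{\ve,y}$ with $|y|\to\infty$ and estimate $\int h\, v_{\ve,y}^p\,dx$ from below and $K(\infty)-\int K\, v_{\ve,y}^{p^*_s}\,dx$ from above; the paper's choice of the intermediate radius $R=|x_\ve|$ together with $|x_\ve|\gg\max\{\ve^{-sp/(\beta-\gamma)},\ \ve^{-sp/(n/(p-1)-\gamma)}\}$ is equivalent to your reparametrization $\ve=|y|^{-\alpha}$. The only point where you are terse is the tail term you label $o(1)$: it equals $\int_{|z|>|y|/\ve}U^{p^*_s}\,dz\le C(|y|/\ve)^{-n/(p-1)}=C|y|^{-(1+\alpha)n/(p-1)}$, and one must still verify that this is dominated by $\ve^{sp}|y|^{-\gamma}=|y|^{-\alpha sp-\gamma}$, i.e.\ $(1+\alpha)\tfrac{n}{p-1}>\alpha sp+\gamma$; this holds for every $\alpha>0$ because $\gamma<\tfrac{n}{p-1}$ (hypothesis \eqref{h.infty}) and $sp<\tfrac{n}{p-1}$ (from $sp^2<n$), so your parametrization does work, but the check should be made explicit rather than absorbed into an unquantified $o(1)$.
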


\begin{remark}
Observe that from Lemma \ref{compacidad.pesos}, in order for $h$ to satisfy \eqref{h1}  and \eqref{h.infty} it is enough to have 
$$
\frac{A}{|x|^\gamma}\le h(x)\le \frac{C}{|x|^{\gamma'}},
$$
for large values of $|x|$, with $sp<\gamma'\le \gamma<\frac{n}{p-1}$. Since we are assuming $sp^2<n$ we have that $sp<\frac{n}{p-1}$ so there always exists admissible values for $\gamma$ and $\gamma'$.
\end{remark}

\medskip 

The method of proof of Theorems \ref{aplicacion.1}, \ref{aplicacion.2},  \ref{aplicacion.2.5} and 
\ref{aplicacion.infty} we just stated is quite standard and relies on the standard Mountain Pass Theorem. 
The Concentration-Compactness Principle Theorem \ref{propCCP} is then used to obtain an existence criterion (see Theorem \ref{teoMP} below). 
We then conclude the proofs of the different results doing some test-functions computations from which we obtain the sufficient conditions stated above.

This scheme of proof is very standard when dealing with critical equations, local or non-local. 
In the nonlocal setting it has been succesfuly used to deal with the fractional Laplacian in bounded or unbounded domain in all the papers mentioned in the introduction. It must be mentioned however that the authors in \cite{Tan} and \cite{BM} adopt Cafarrelli-Silvestre's approach of the fractional Laplacian considering the extension of the equation to $\R^{n+1}$ which allows to recover a local setting at the cost of dealing with degenerate operators, see \cite{CS}.
 
Another approach has been taken in \cite{BM2} where the authors construct a solution using the Lyapunov-Schmidt reduction method considering equation \eqref{MainEqu} for small values of $\lambda$ and thus as a perturbation of the pure critical case. 

The existence conditions we obtained are the natural counterpart of the known results in the local case $s=1$ and agrees with the results obtained in the fractional setting in \cite{MPSY} and \cite{SV3}. It must be mentioned however that our last result, though using the same ideas,
does not seem to be classical even in the local case (we refer to \cite{SaintierSilva} where this kind of result has been proved recently in the context of variable exponent spaces).

\subsection{An existence criterion}

We will look for a solution of \eqref{MainEqu} as a critical point of the associated functional
$$
\F_\lambda\colon \D^{s,p}(\R^n)\to\R\\
$$
\begin{equation}\label{DefF}
\F_\lambda(u) := \frac{1}{p}\iint_{\R^n\times \R^n}\frac{|u(x)-u(y)|^p}{|x-y|^{n+sp}}\,dxdy - \frac{\lambda}{q}\int_{\R^n} h(x) |u|^q\, dx - \frac{1}{p^*_s} \int_{\R^n} K(x)|u|^{p^*_s}\, dx.
\end{equation}
We first prove a  preliminary lemma which is more or less classical.

\begin{lema}\label{acotada1}
Let $\{u_k\}_{k\in\N} \subset \D^{s,p}(\R^n)$ be a Palais-Smale sequence for $\F_\lambda$. Then, up to a subsequence, there exists $u\in \D^{s,p}(\R^n)$ such that $u_k\rightharpoonup u$ weakly in $\D^{s,p}(\R^n)$ and $u$ is a weak solution of \eqref{MainEqu}.

Moreover letting $\mu$, $\nu$, $\mu_i$, $\nu_i$, $\mu_\infty$, $\nu_\infty$ be as in Theorem \ref{propCCP} when applied to $\{u_k\}_{k\in\N}$ we have the following estimates:
\begin{align}
&  \nu_i \ge  S^\frac{n}{sp} K(x_i)^{-\frac{n}{sp}}, \quad  \mu_i \ge   S^\frac{n}{sp} K(x_i)^{1-\frac{n}{sp}} \qquad \text{if } K(x_i)>0,  \label{CotaInf1} \\
& \mu_i = \nu_i=0 \qquad \text{ if } K(x_i)= 0, \label{CotaInf2}
\end{align}
and a similar result at infinity:
\begin{align}
& \nu_\infty \ge  S^\frac{n}{sp} K(\infty)^{-\frac{n}{sp}}, \quad \mu_\infty \ge  S^\frac{n}{sp}K(\infty)^{1-\frac{n}{sp}} \qquad \text{if } K(\infty)>0, \label{CotaInf3}  \\
& \mu_\infty = \nu_\infty=0 \qquad \text{ if } K(\infty)= 0, \label{CotaInf4}
\end{align}
where $K(\infty) = \limsup_{|x|\to\infty} K(x)$.
\end{lema}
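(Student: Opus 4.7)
The plan is to adapt the standard Palais--Smale scheme for critical-growth problems to the nonlocal setting, using Theorem \ref{propCCP}, Corollary \ref{key.estimate} and Lemma \ref{compacidad.pesos} as the main technical ingredients.

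\textbf{Boundedness and weak solution.} First I would show $\{u_k\}$ is bounded in $\D^{s,p}(\R^n)$. Using $\F_\lambda(u_k)\to c$ and $\langle \F'_\lambda(u_k),u_k\rangle = o(\|u_k\|)$, computing $\F_\lambda(u_k)-\tfrac1q\langle\F'_\lambda(u_k),u_k\rangle$ when $p<q<p^*_s$ isolates $(\tfrac1p-\tfrac1q)[u_k]_{s,p}^p$ plus a nonnegative critical term, giving coercivity; in the case $q=p$ I would instead compute $\F_\lambda(u_k)-\tfrac1{p^*_s}\langle\F'_\lambda(u_k),u_k\rangle = (\tfrac1p-\tfrac1{p^*_s})([u_k]_{s,p}^p-\lambda\int h|u_k|^p\,dx)$, which is coercive under $\lambda<\lambda_1(h)$ by Remark \ref{lambda1}. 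With boundedness in hand, extract a subsequence $u_k\rightharpoonup u$ in $\D^{s,p}$, converging a.e.\ and, by \eqref{h1}, strongly in $L^q(h\,dx;\R^n)$. A standard a.e.\ plus Br\'ezis--Lieb type argument (or the Minty--Browder monotonicity trick for the $(-\Delta_p)^s$ nonlinearity) then lets me pass to the limit in $\langle\F'_\lambda(u_k),\varphi\rangle = o(1)$ for any $\varphi\in C^\infty_c(\R^n)$, proving that $u$ is a weak solution of \eqref{MainEqu}.

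\textbf{Estimates at $x_i$.} With $\mu,\nu$ from Theorem \ref{propCCP} and a cutoff $\phi_{i,\ve}(x)=\phi((x-x_i)/\ve)$ as in the proof of \eqref{numu}, I test with $\phi_{i,\ve}u_k\in\D^{s,p}(\R^n)$, namely $\langle\F'_\lambda(u_k),\phi_{i,\ve}u_k\rangle=o(1)$. Splitting $(\phi_{i,\ve}u_k)(x)-(\phi_{i,\ve}u_k)(y) = \phi_{i,\ve}(x)(u_k(x)-u_k(y))+u_k(y)(\phi_{i,\ve}(x)-\phi_{i,\ve}(y))$ and symmetrizing in $x,y$, this rewrites as $\int\phi_{i,\ve}|D^s u_k|^p\,dx + C_k = \lambda\int h|u_k|^q\phi_{i,\ve}\,dx + \int K|u_k|^{p^*_s}\phi_{i,\ve}\,dx + o(1)$, with a cross term $C_k$ bounded by H\"older inequality by $[u_k]_{s,p}^{p-1}\bigl(\int|u_k|^p|D^s\phi_{i,\ve}|^p\,dx\bigr)^{1/p}$. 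Sending $k\to\infty$ using the weak convergence of $|D^s u_k|^p\,dx$ and $|u_k|^{p^*_s}\,dx$, the strong $L^q(h\,dx)$ convergence, and Lemma \ref{compacidad.pesos} with weight $|D^s\phi_{i,\ve}|^p$ (which satisfies its hypotheses thanks to Corollary \ref{key.estimate}), then sending $\ve\to 0$ using the vanishing of $\int|u|^p|D^s\phi_{i,\ve}|^p\,dx$ exactly as proved for \eqref{limite.dificil}, yields $\mu_i\le K(x_i)\nu_i$. Combined with \eqref{numu}: if $K(x_i)>0$ then $S\nu_i^{p/p^*_s}\le\mu_i\le K(x_i)\nu_i$, which, using $1-p/p^*_s=sp/n$, algebraically gives $\nu_i\ge S^{n/sp}K(x_i)^{-n/sp}$ and then $\mu_i\ge S\nu_i^{p/p^*_s}\ge S^{n/sp}K(x_i)^{1-n/sp}$; if $K(x_i)=0$, the bound forces $\mu_i=0$ and then \eqref{numu} forces $\nu_i=0$.

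\textbf{Estimates at infinity.} I repeat the same test with the far-field cutoff $\phi_R$ from the proof of Theorem \ref{propCCP} (identically $1$ on $\{|x|\ge 2R\}$, supported in $\{|x|\ge R\}$), obtaining an identical splitting. Passing first $k\to\infty$ and then $R\to\infty$: the main Gagliardo term contributes $\mu_\infty$ by the argument that produced \eqref{mu2}; the cross term vanishes by the weighted compactness argument already used for \eqref{to0}--\eqref{limite.R}, since by Corollary \ref{key.estimate} applied to $1-\phi_R$ the weight $|D^s\phi_R|^p$ is dominated uniformly in $R$ by $C\min\{1,|x|^{-(n+sp)}\}$; the $h$-term vanishes by compactness of $\D^{s,p}\hookrightarrow L^q(h\,dx)$ given by \eqref{h1}; and the critical term contributes $K(\infty)\nu_\infty$ after splitting $K=K(\infty)+(K-K(\infty))$ and bounding the remainder by $\sup_{|x|\ge R}|K(x)-K(\infty)|\to 0$ (using \eqref{K1} or \eqref{K.infty}). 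This yields $\mu_\infty\le K(\infty)\nu_\infty$, and combining with \eqref{CCPinf3} gives \eqref{CotaInf3}--\eqref{CotaInf4} by the same algebra as in the previous step.

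\textbf{Main obstacle.} The hard part is handling the cross terms $C_k$ and $C_k^{(R)}$: in the local case the analogous quantity involves $|\nabla\phi|$, which has compact support, whereas here $|D^s\phi_{i,\ve}|^p$ and $|D^s\phi_R|^p$ are supported on all of $\R^n$. The only way to make them vanish in the limit is to combine the precise decay provided by Corollary \ref{key.estimate} with the weighted compact embedding of Lemma \ref{compacidad.pesos}, which lets us first replace $u_k$ by $u$ in the cross term and then exploit the sharper localization estimates already established inside the proof of Theorem \ref{propCCP} to drive the resulting quantity to zero.
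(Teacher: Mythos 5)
Your proof is correct and takes essentially the same approach as the paper: boundedness via the standard $\F_\lambda - c\langle\F_\lambda',\cdot\rangle$ trick; identification of $u$ as a weak solution by weak $L^{p'}$-convergence of the nonlinearity (or, in your phrasing, a Br\'ezis--Lieb/monotonicity argument); testing $\F_\lambda'(u_k)$ against $\phi_{i,\ve}u_k$ and $\phi_R u_k$; controlling the nonlocal cross term by H\"older together with Corollary \ref{key.estimate} and Lemma \ref{compacidad.pesos}, exactly as the paper does to prove \eqref{limite.dificil} and \eqref{limite.R}; and then closing the algebra with \eqref{numu} and \eqref{CCPinf3}. Two small remarks: (i) you are actually slightly more careful than the paper on the boundedness step, which as written in the paper (subtracting $\tfrac1q\langle\F_\lambda',u_k\rangle$) only gives coercivity when $p<q$, whereas you correctly switch to subtracting $\tfrac1{p^*_s}\langle\F_\lambda',u_k\rangle$ and invoke $\lambda<\lambda_1(h)$ when $q=p$; (ii) you state only the one-sided bounds $\mu_i\le K(x_i)\nu_i$ and $\mu_\infty\le K(\infty)\nu_\infty$, but since your H\"older estimate controls the cross term in absolute value and drives it to zero, you in fact obtain the equalities \eqref{Cota} and \eqref{CotaInfinity} as the paper does --- the inequality suffices for the estimates in Lemma \ref{acotada1}, but the equality is what is later used in Proposition \ref{PropPSCond}, so it is worth recording.
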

\begin{proof}
The proof is more or less classical so we will be sketchy. We first prove that $\{u_k\}_{k\in\N}$ is bounded in $\D^{s,p}(\R^n)$. Recalling the definition of a Palais-Smale sequence, it is easily seen that
\begin{align*}
C+o(1) [u_k]_{s,p} &\geq \F_\lambda(u_k) - \frac{1}{q} \langle \F_\lambda'(u_k), u_k\rangle\\
&\geq \left(\frac{1}{p} - \frac{1}{q}\right) [u_k]_{s,p}^p + \left(\frac{1}{q} - \frac{1}{p^*_s}\right) \int_{\R^n}  K(x) |u_k|^{p^*_s}\, dx.
\end{align*}
Therefore, from \eqref{K1} we conclude that $\{u_k\}_{k\in\N}$ is bounded in $\D^{s,p}(\R^n)$.

Up to a subsequence we can thus assume that $\{u_k\}_{k\in\N}$ weakly converges in $\D^{s,p}(\R^n)$ to some $u$, and then also that the convergence holds in $L^q(h\, dx, \R^n)$.

Estimates \eqref{CotaInf1}--\eqref{CotaInf4} are a direct consequence
of \eqref{numu}, \eqref{CCPinf3} and the following:
\begin{equation}\label{Cota}
\mu_i = \nu_i K(x_i) \qquad \text{for any }i\in I,
\end{equation}
and  
\begin{equation}\label{CotaInfinity}
\mu_\infty = \nu_\infty K(\infty).
\end{equation}
To prove \eqref{Cota}, we fix a concentration point $x_i$, a smooth function $\phi\colon \R^n\to [0,1]$  with compact support in $B_2$  such that $\phi=1$ in $B_1$, and consider $\phi_\delta(x):=\phi(\frac{|x-x_i|}{\delta})$. 

Notice that the sequence $\{u_k \phi_\delta\}_{k\in\N}$ is bounded in $\D^{s,p}(\R^n)$. We then write that
\begin{align*}
\langle(-\Delta_p)^s u_k, u_k\phi_\delta\rangle =& \iint_{\R^n\times\R^n} \frac{|u_k(x)-u_k(y)|^p}{|x-y|^{n+sp}} \phi_\delta(x)\, dxdy\\
&+ \iint_{\R^n\times \R^n} \frac{|u_k(x)-u_k(y)|^{p-2}(u_k(x)-u_k(y))(\phi_\delta(x)-\phi_\delta(y))}{|x-y|^{n+sp}}u_k(y)\, dxdy\\
=&  I + II\\
\end{align*}
For the first term, we have
$$
I = \int_{\R^n}\phi_\delta |D^s u_k|^p\, dx\to \int_{\R^n} \phi_\delta\, d\mu.
$$
But, since $\mu - \sum_{i\in I}\mu_i \delta_{x_i}$ has no atoms and $\phi_\delta (x)\to 0$ as $\delta\to 0$ for any $x\neq x_1$, we conclude that
$$
\lim_{\delta\to 0} \lim_{k\to\infty} I = \mu_i.
$$
The second term converges to 0. In fact,
\begin{align*}
II &\leq \int_{\R^n} |u_k(y) |\left(|D^s u_k(y)|^p\right)^{\frac{1}{p'}} \left(|D^s\phi_\delta(y)|^p\right)^{\frac{1}{p}}\,dy\\
& \le \|D^s u_k\|_p^\frac{p}{p'} \left(\int_{\R^n} |u_k|^p |D^s\phi_\delta|^p\, dx\right)^\frac{1}{p}\\
&\le C \left(\int_{\R^n} |u_k|^p |D^s\phi_\delta|^p\, dx\right)^\frac{1}{p}.
\end{align*}
Using now Lemmas \ref{decay} and \ref{compacidad.pesos}, we get that
$$
\limsup_{k\to\infty} II \le C \left(\int_{\R^n} |u|^p |D^s\phi_\delta|^p\, dx\right)^\frac{1}{p}.
$$
Finally, arguing as in the proof of \eqref{limite.dificil}, it follows that 
$$
\lim_{\delta \to 0}\lim_{k\to\infty} II =  0.
$$

On the other hand as $\{u_k\}_{k\in\N}$ is  a Palais-Smale sequence,
$$
o(1)=\langle\F_\lambda'(u_k), u_k\phi_\delta\rangle = \langle -\Delta_p^s u_k, u_k\phi_\delta\rangle - \lambda \int_{\R^n} h(x) |u_k|^q\phi_\delta\, dx - \int_{\R^n} K(x) |u_k|^{p^*_s} \phi_\delta\, dx.
$$
It is easy to check that
$$
\lim_{\delta\to0}\lim_{k\to\infty} \int_{\R^n} h(x) |u_k|^q\phi_\delta\, dx = \lim_{\delta\to0} \int_{\R^n} h(x) |u|^q \phi_\delta\, dx = 0
$$
and
$$
\lim_{\delta\to0}\lim_{k\to\infty} \int_{\R^n} K(x) |u_k|^{p^*_s}\phi_\delta\, dx = \lim_{\delta\to0} \int_{\R^n} K(x) \phi_\delta\, d\nu = K(x_i)\nu_i.
$$
We conclude that $K(x_i)\nu_i = \mu_i$. 

The proof of \eqref{CotaInfinity} is similar to the one of \eqref{Cota}. In this case, we fix $\phi\colon \R^n\to [0,1]$ be a smooth function such that $\phi\equiv 0$ in $B_1$ and $\phi\equiv 1$ in $\R^n\backslash B_2$, and then consider $\phi_R(x):=\phi(\frac{x}{R})$. Notice that for a given $R>0$, the sequence $\{u_k \phi_R\}_{k\in\N}$ is bounded in $\D^{s,p}(\R^n)$.

Now
\begin{align*}
\langle (-\Delta_p)^s u_k, u_k\phi_R\rangle =& \iint_{\R^n\times\R^n} \frac{|u_k(x)-u_k(y)|^{p}}{|x-y|^{n+sp}}\phi_R(x)\, dxdy \\
& + \iint_{\R^n\times \R^n} \frac{|u_k(x)-u_k(y)|^{p-2}(u_k(x)-u_k(y))(\phi_R(x)-\phi_R(y))}{|x-y|^{n+sp}}u_k(y)\, dxdy\\
=& I+II.
\end{align*}
For $I$ we have $\lim_{R\to\infty}\lim_{k\to\infty} I =  \lim_{R\to\infty}\lim_{k\to\infty}\int_{\R^n}\phi_R |D^s u_k|^{p}\,dx = \mu_\infty.$

Proceeding as in the proof of \eqref{Cota}, we get that
$$
\limsup_{k\to\infty} II \le C\left(\int_{\R^n}|u|^p|D^s\phi_R|^p\,dy\right)^{\frac{1}{p}}.
$$
Finally, we argue as in the proof of \eqref{limite.R} and obtain that
$$
\lim_{R\to\infty}\lim_{k\to\infty} II = 0.
$$

As before,
$$
o(1)=\langle \F_\lambda'(u_k), u_k\phi_R\rangle = \langle -\Delta_p^s u_k, u_k\phi_R\rangle - \lambda \int_{\R^n} h(x) |u_k|^q \phi_R\, dx - \int_{\R^n} K(x) |u_k|^{p^*_s}\phi_R\, dx.
$$
It is easy to see that
\begin{align*}
& \lim_{R\to\infty}\lim_{k\to\infty} \int_{\R^n} h(x) |u_k|^q \phi_R\, dx = 0,\\
& \lim_{R\to\infty}\lim_{k\to\infty} \int_{\R^n} K(x) |u_k|^{p^*_s}\phi_R\,dx = K(\infty)\nu_\infty.
\end{align*}
In fact,
\begin{align*} 
\int_{\R^n} K(x) |u_k|^{p^*_s}\phi_R\,dx 
= K(\infty)\int_{\R^n} |u_k|^{p^*_s}\phi_R\,dx
+ \int_{\R^n} (K(x)-K(\infty)) |u_k|^{p^*_s}\phi_R\,dx
= K(\infty)I + II, 
\end{align*}
and passing to the limit as $k\to\infty$ and then as $R\to\infty$ we get that $I\to \nu_\infty$ and $II\to 0$ since, given $\ve>0$, for $R$ large, we have $II\le \ve \int_{|x|\ge R}|u_k|^{p^*}dx \to \ve\nu_\infty$. 

Thus combining these estimates, we obtain \eqref{CotaInfinity}.

It follows from \eqref{CotaInf1}--\eqref{CotaInf2} that there are at most a finite number of concentration points. 

It remains to see that $u$ is a weak solution of \eqref{MainEqu}. But this is somewhat standard, since given $v\in C^\infty_c(\R^n)$, we have that
\begin{align*}
o(1) = &\langle \F_\lambda'(u_k), v \rangle \\
= &\iint_{\R^n\times\R^n} \frac{|u_k(x)-u_k(y)|^{p-2}(u_k(x)-u_k(y))}{|x-y|^\frac{n+sp}{p'}} \frac{v(x)-v(y)}{|x-y|^{\frac{n}{p}+s}}\, dxdy\\
& - \lambda \int_{\R^n} h(x) |u_k|^{q-2}u_k v\, dx - \int_{\R^n} K(x) |u_k|^{p^*_s-2}u_k v\, dx.
\end{align*}
By standard integration theory, one gets that
\begin{align*}
&\int_{\R^n} h(x) |u_k|^{q-2}u_k v\, dx  \to \int_{\R^n} h(x) |u|^{q-2}u v\, dx, \\
&\int_{\R^n} K(x) |u_k|^{p^*_s-2}u_k v\, dx \to \int_{\R^n} K(x) |u|^{p^*_s-2}u v\, dx.
\end{align*}

Now, if we call
$$
\xi_k(x,y) = \frac{|u_k(x)-u_k(y)|^{p-2}(u_k(x)-u_k(y))}{|x-y|^\frac{n+sp}{p'}},
$$
then $\{\xi_k\}_{k\in\N}$ is bounded in $L^{p'}(\R^n\times\R^n)$ and so there exists $\xi\in L^{p'}(\R^n\times\R^n)$ such that $\xi_k\rightharpoonup \xi$ weakly in $L^{p'}(\R^n\times\R^n)$.

Therefore
$$
\iint_{\R^n\times\R^n} \xi_k(x,y) \frac{v(x)-v(y)}{|x-y|^{\frac{n}{p}+s}}\, dxdy\to \iint_{\R^n\times\R^n} \xi(x,y)\frac{v(x)-v(y)}{|x-y|^{\frac{n}{p}+s}}\, dxdy.
$$
Finally, since $u_k\to u$ a.e. in $\R^n$, one obtains that $\xi_k\to \frac{|u(x)-u(y)|^{p-2}(u(x)-u(y))}{|x-y|^\frac{n+sp}{p'}}$ a.e. in $\R^n\times\R^n$ and so
$$
\xi(x,y) =  \frac{|u(x)-u(y)|^{p-2}(u(x)-u(y))}{|x-y|^\frac{n+sp}{p'}}.
$$
These facts altogether give that $u$ is a weak solution to \eqref{MainEqu}.
\end{proof}

We now prove that the functional $\F_\lambda$ verifies the Palais-Smale condition for small energy levels.

\begin{prop}\label{PropPSCond}
The functional $\F_\lambda$ defined in \eqref{DefF} verifies the Palais-Smale condition at level $c$ for all real numbers $c$ satisfying
$$ 
c< \frac{s}{N} S^{\frac{N}{sp}} \|K\|_\infty^{1-\frac{N}{sp}}.
$$
\end{prop}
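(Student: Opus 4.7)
The plan is to take a Palais-Smale sequence $\{u_k\}\subset\D^{s,p}(\R^n)$ at level $c$, apply Lemma \ref{acotada1} to extract a weak limit $u$ that solves \eqref{MainEqu} weakly together with the concentration-compactness data $\mu$, $\nu$, $\mu_i$, $\nu_i$, $\mu_\infty$, $\nu_\infty$, and then rule out every concentration using the smallness of $c$. The key algebraic identity is
$$
c + o(1) = \F_\lambda(u_k) - \tfrac{1}{p^*_s}\langle\F_\lambda'(u_k), u_k\rangle = \tfrac{s}{n}[u_k]_{s,p}^p - \lambda\Bigl(\tfrac{1}{q} - \tfrac{1}{p^*_s}\Bigr)\int_{\R^n} h|u_k|^q\,dx,
$$
in which the critical term cancels. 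Letting $k\to\infty$ and using \eqref{CCPinf1} on the Gagliardo term, the lower bound \eqref{Du} on $\mu$, and the compact embedding from \eqref{h1} on the subcritical term, one gets
$$
c \ge \tfrac{s}{n}\Bigl([u]_{s,p}^p + \sum_{i\in I}\mu_i + \mu_\infty\Bigr) - \lambda\Bigl(\tfrac{1}{q} - \tfrac{1}{p^*_s}\Bigr)\int_{\R^n} h|u|^q\,dx = \F_\lambda(u) + \tfrac{s}{n}\Bigl(\sum_{i\in I}\mu_i + \mu_\infty\Bigr),
$$
where the last equality follows by applying the same manipulation to $u$, using $\langle\F_\lambda'(u),u\rangle = 0$.

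Next I would check that $\F_\lambda(u) \ge 0$. Using $\langle\F_\lambda'(u),u\rangle = 0$ to eliminate one of the lower-order terms yields $\F_\lambda(u) = (\tfrac{1}{p}-\tfrac{1}{q})[u]_{s,p}^p + (\tfrac{1}{q}-\tfrac{1}{p^*_s})\int K|u|^{p^*_s}\,dx$ when $p<q<p^*_s$ and $\F_\lambda(u) = \tfrac{s}{n}\int K|u|^{p^*_s}\,dx$ when $q=p$; both expressions are manifestly nonnegative. Assume now for contradiction that some $\mu_i$ or $\mu_\infty$ is strictly positive. Since $1 - n/(sp) < 0$, the map $t \mapsto t^{1-n/sp}$ is decreasing on $(0,\infty)$, so \eqref{CotaInf1}--\eqref{CotaInf4} combined with $K(x_i), K(\infty) \le \|K\|_\infty$ give $\mu_i \ge S^{n/sp}\|K\|_\infty^{1-n/sp}$ or $\mu_\infty \ge S^{n/sp}\|K\|_\infty^{1-n/sp}$. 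Feeding this back yields $c \ge \tfrac{s}{n}S^{n/sp}\|K\|_\infty^{1-n/sp}$, contradicting the hypothesis.

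Hence $\mu_i = 0$ for every $i\in I$ and $\mu_\infty = 0$, whence Lemma \ref{acotada1} also forces $\nu_i = 0$ and $\nu_\infty = 0$. Consequently \eqref{CCPinf2} together with $K\in L^\infty$ gives $\int_{\R^n} K|u_k|^{p^*_s}\,dx \to \int_{\R^n} K|u|^{p^*_s}\,dx$; combining this with the compact embedding for the subcritical term and with $\langle\F_\lambda'(u_k),u_k\rangle = o(1) = \langle\F_\lambda'(u),u\rangle$ one obtains $[u_k]_{s,p}^p \to [u]_{s,p}^p$. Weak convergence together with convergence of norms in the uniformly convex space $\D^{s,p}(\R^n)$ (Radon-Riesz property) then upgrades to strong convergence. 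The main obstacle, and precisely where Theorem \ref{propCCP} is indispensable, is ruling out loss of mass at infinity: without the atom $\mu_\infty$ and its matching lower bound \eqref{CotaInf3} one could not exclude that the Palais-Smale sequence leaks critical energy at infinity, which is what distinguishes the argument from its bounded-domain counterpart.
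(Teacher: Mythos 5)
Your proof is correct and follows essentially the same strategy as the paper's: invoke Lemma \ref{acotada1} and the concentration--compactness Theorem \ref{propCCP} to bound $c$ from below by $\F_\lambda(u)+\tfrac{s}{n}\bigl(\sum_{i\in I}\mu_i+\mu_\infty\bigr)$, observe $\F_\lambda(u)\ge0$ because $u$ is a weak solution, and then use the lower bounds \eqref{CotaInf1}--\eqref{CotaInf4} (together with $1-\tfrac{n}{sp}<0$ and $K\le\|K\|_\infty$) to show that any nonzero $\mu_i$ or $\mu_\infty$ forces $c\ge\tfrac{s}{n}S^{n/sp}\|K\|_\infty^{1-n/sp}$, a contradiction. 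Two points where your write-up differs slightly, both to your credit: subtracting $\tfrac{1}{p^*_s}\langle\F_\lambda'(u_k),u_k\rangle$ at the start cancels the critical term outright, so you never have to pass to the limit in $\int K|u_k|^{p^*_s}\,dx$ via the measures $\nu,\nu_\infty$ (the paper does this directly, and in fact its displayed line for $\lim_k\F_\lambda(u_k)$ omits the $K(\infty)\nu_\infty$ contribution, though the subsequent inequality is still correct); and you spell out the final step---once all concentrations vanish, $\langle\F_\lambda'(u_k),u_k\rangle\to0=\langle\F_\lambda'(u),u\rangle$ together with the compact embedding and $\nu_i=\nu_\infty=0$ gives $[u_k]_{s,p}\to[u]_{s,p}$, whence strong convergence by uniform convexity---which the paper compresses into ``and the result follows.''
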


\begin{proof}
Let $\{u_k\}_{k\in\N}\subset \D^{s,p}(\R^n)$ be a Palais-Smale sequence for $\F_\lambda$ of level $c$. Up to a subsequence, we can assume that $\{u_k\}_{k\in\N}$ weakly converges  to some $u\in \D^{s,p}(\R^n)$. 

Let $\mu$, $\nu$, $\mu_i$, $\nu_i$, $\mu_\infty$, $\nu_\infty$ be as in the concentration-compactness principle  Theorem. \ref{propCCP} when applied to $\{u_k\}_{k\in\N}$. Then
\begin{align*}
c &= \lim_{k\to\infty} \F_\lambda(u_k)\\
&=  \frac{1}{p} (\mu(\R^n) + \mu_\infty) - \frac{\lambda}{q}\int_{\R^n} h(x) |u|^q\,dx - \frac{1}{p^*_s} \int_{\R^n} K(x)\,d\nu \\
&\ge  \F_\lambda(u) + \sum_{i\in I} \left(\frac{\mu_i}{p} - \frac{K(x_i)\nu_i}{p^*_s}\right) + \frac{\mu_\infty}{p} - \frac{K(\infty)\nu_\infty}{p^*_s}.
\end{align*}
Observe now that, since by Lemma \ref{acotada1} $u$ is a solution to \eqref{MainEqu}, we have that
$$
\F_\lambda(u) = \lambda \left(\frac{1}{p}-\frac{1}{q}\right)\int_{\R^n} h(x) |u|^q\, dx + \left(\frac{1}{p}-\frac{1}{p^*_s}\right)\int_{\R^n} K(x) |u|^{p^*_s}\, dx\ge 0,
$$
then we deduce, using \eqref{Cota}, \eqref{CotaInfinity} and \eqref{CotaInf1}-\eqref{CotaInf4}, that
\begin{equation*}
c \ge  \frac{s}{N} S^\frac{n}{sp} \|K\|_{\infty}^{1-\frac{N}{sp}} (\# I + 1)
\end{equation*}
therefore $I=\emptyset$, $\nu_\infty=\mu_\infty=0$ and the result follows.
\end{proof}

A direct application of the Mountain-pass theorem combined with Proposition \ref{PropPSCond} then yields the following existence condition:

\begin{teo}\label{teoMP} Let $0<s<1<p\le q$ be such that $sp<n$ and $p\le q<p^*_s$. Assume \eqref{h1} and \eqref{K1}. Moreover, assume that there exists $v\in \D^{s,p}(\R^n)$ such that
\begin{equation}\label{CCPCond}
 \sup_{t>0} \F_\lambda(tv) < \frac{s}{n} S^\frac{n}{sp} \|K\|_\infty^{1-\frac{n}{sp}}.
\end{equation}
Then there exists a non-trivial  solution for \eqref{MainEqu}.
\end{teo}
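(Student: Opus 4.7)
The plan is to apply the Mountain Pass Theorem of Ambrosetti--Rabinowitz to $\F_\lambda$ and then use Proposition \ref{PropPSCond} to upgrade the resulting Palais--Smale sequence to a strongly convergent one.

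First I would verify the standard Mountain Pass geometry. Clearly $\F_\lambda(0)=0$. Near the origin, combining the Sobolev inequality \eqref{S} with the continuous embedding $\D^{s,p}(\R^n)\subset L^q(h\,dx;\R^n)$ from \eqref{h1} yields an estimate
$$
\F_\lambda(u)\ge\frac{1}{p}[u]_{s,p}^p-\lambda C_1[u]_{s,p}^q-C_2[u]_{s,p}^{p^*_s},
$$
with the subcritical term controlled via the Rayleigh quotient $\lambda_1(h)$ in the borderline case $q=p$. Since $p<p^*_s$, and since either $q>p$ or, when $q=p$, $\lambda<\lambda_1(h)$ (the restriction that will be imposed in the applications), this gives $\F_\lambda(u)\ge\alpha>0$ on a small sphere $[u]_{s,p}=\rho$. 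On the other hand, the very hypothesis \eqref{CCPCond} forces $\sup_{t>0}\F_\lambda(tv)<+\infty$, hence $\int_{\R^n}K|v|^{p^*_s}\,dx>0$, so that $\F_\lambda(tv)\to-\infty$ as $t\to+\infty$; picking $T>0$ large enough, $e:=Tv$ satisfies $[e]_{s,p}>\rho$ and $\F_\lambda(e)<0$.

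Having the geometry, set
$$
c:=\inf_{\gamma\in\Gamma}\max_{t\in[0,1]}\F_\lambda(\gamma(t)),\qquad \Gamma:=\{\gamma\in C([0,1],\D^{s,p}(\R^n))\colon\gamma(0)=0,\ \gamma(1)=e\}.
$$
The Mountain Pass Theorem yields a Palais--Smale sequence $\{u_k\}_{k\in\N}$ at level $c$. Testing with the distinguished path $\gamma_0(t):=tTv$, $t\in[0,1]$, I would then bound
$$
\alpha\le c\le\max_{t\in[0,1]}\F_\lambda(tTv)\le\sup_{\tau>0}\F_\lambda(\tau v)<\frac{s}{n}\,S^{\frac{n}{sp}}\,\|K\|_\infty^{1-\frac{n}{sp}},
$$
which is exactly the threshold appearing in Proposition \ref{PropPSCond}.

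Proposition \ref{PropPSCond} therefore furnishes a subsequence (still denoted $u_k$) converging strongly in $\D^{s,p}(\R^n)$ to some $u$. Continuity of $\F_\lambda$ gives $\F_\lambda(u)=c\ge\alpha>0=\F_\lambda(0)$, so $u\not\equiv 0$; continuity of $\F_\lambda'$ together with $\F_\lambda'(u_k)\to 0$ in the dual gives $\F_\lambda'(u)=0$. Thus $u$ is the desired nontrivial weak solution. Since the genuine compactness analysis is packaged inside Proposition \ref{PropPSCond} via the concentration--compactness principle, the only residual delicate point in this argument is the verification of the geometric assumption at the origin in the threshold case $q=p$, which is precisely why the forthcoming applications (Theorems \ref{aplicacion.2}, \ref{aplicacion.2.5}, \ref{aplicacion.infty}) need to assume $\lambda<\lambda_1(h)$.
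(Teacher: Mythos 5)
Your argument is correct and follows exactly the route the paper intends: verify the mountain pass geometry, bound the min--max level by $\sup_{t>0}\F_\lambda(tv)$ using the straight path, and then invoke Proposition \ref{PropPSCond} to recover compactness below the threshold. The paper's own proof is telegraphic --- it only records the $q=p$ coercivity inequality $[v]_{s,p}^p-\lambda\int h|v|^p\,dx\ge(1-\lambda/\lambda_1(h))[v]_{s,p}^p$ and declares the rest standard --- so you have simply written out the usual details, and your remark that the borderline case $q=p$ tacitly requires $\lambda<\lambda_1(h)$ (supplied in the applications) is apt. One small inferential slip: when $q>p$, finiteness of $\sup_{t>0}\F_\lambda(tv)$ does not by itself force $\int_{\R^n}K|v|^{p^*_s}\,dx>0$; it only forces that this integral and $\int_{\R^n}h|v|^q\,dx$ are not both zero. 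But since $q>p$, either possibility already gives $\F_\lambda(tv)\to-\infty$ as $t\to\infty$, so the needed conclusion and the rest of your argument are unaffected.
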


\begin{proof}
We only need to check the geometric conditions of the Mountain Pass Theorem. The case $p<q<p^*_s$ is standard and is omitted.

For the case $p=q$ we just observe that if $0<\lambda<\lambda_1(h)$, then
$$
[v]_{s,p}^p - \lambda\int_{\R^n} h(x) |v|^p\, dx\ge \left(1-\frac{\lambda}{\lambda_1(h)}\right)[v]_{s,p}^p.
$$
From this inequality the rest of the proof is standard.
\end{proof}

\subsection{Case $p<q<p^*_s$. Proof of Theorem \ref{aplicacion.1}}

We only need to show the existence of $v\in \D^{s,p}(\R^n)$ such that \eqref{CCPCond} holds. To this end, fix $v\in \D^{s,p}(\R^n)$ such that 
$$
\int_{\R^n} K(x) |v|^{p^*_s}\, dx = 1,
$$
and define 
$$
\varphi_\lambda(t) = \F_\lambda(tv) = \frac{t^p}{p} [v]_{s,p}^p - \lambda \frac{t^q}{q} \int_{\R^n} h(x) |v|^q\, dx - \frac{t^{p^*_s}}{p^*_s}.
$$
It is easy to see that given $\lambda>0$, there exists $t_\lambda>0$ such that $\sup_{t>0} \F_\lambda(tv) = \varphi_\lambda(t_\lambda)$.

We will show that $t_\lambda\to 0$ as $\lambda\to\infty$ and so $\limsup_{\lambda\to\infty} \varphi_\lambda(t_\lambda)\le 0$, therefore the conclusion of Theorem \ref{aplicacion.1} follows.

Now, just observe that
$$
0 = \varphi_\lambda'(t_\lambda) = t_\lambda^{p-1} [v]_{s,p}^p - \lambda t_\lambda^{q-1} \int_{\R^n} h(x) |v|^q\, dx - t_\lambda^{p^*_s - 1},
$$
from where it follows that
\begin{equation}\label{cota.tlambda}
[v]_{s,p}^p = t_\lambda^{p^*_s - p} + t_\lambda^{q-p}\lambda\int_{\R^n}h(x) |v|^q\, dx.
\end{equation}
From \eqref{cota.tlambda} if follows that $t_\lambda$ is bounded and, moreover,
$$
t_\lambda\le \left(\frac{[v]_{s,p}^p}{\lambda\int_{\R^n} h(x) |v|^q\, dx}\right)^\frac{1}{q-p}.
$$
So, $t_\lambda\to 0$ as we wanted to show and this concludes the proof of Theorem \ref{aplicacion.1}.\qed

\begin{remark}
A careful observation of the proof of Theorem \ref{aplicacion.1} provides with a (somewhat) explicit lower bound for $\lambda_0$. In fact, is we denote
$$
C=C(n,s,p,q,h) = \inf_{v\in \D^{s,p}(\R^n)} \frac{[v]_{s,p}^p}{\left(\int_{\R^n} h(x) |v|^q\, dx\right)^\frac{p}{q}},
$$
which is positive and well defined by \eqref{h1}, then one gets
$$
\lambda_0 > C^\frac{q}{p} \left(\frac{s}{n} S^\frac{n}{sp} \|K\|_\infty^{1-\frac{n}{sp}}\right)^\frac{p-q}{p}.
$$
\end{remark}

\begin{remark}
We want to point out that, once the functional setup for the functional $\F_\lambda$ is stablished, together with the compact immersion $\D^{s,p}(\R^n)\subset \subset L^p(h, dx)$, one can obtain the existence result for \eqref{MainEqu} under the same assumptions of Theorem \ref{aplicacion.1} but for any $\lambda\ge \lambda_1(h)$. In fact, following the ideas of \cite{Perera-Squassina-Yang}, one applies a Linking theorem due to Yang and Perera in \cite{Yang-Perera} (see also \cite{Candito}) exactly in the same way as in \cite{Perera-Squassina-Yang} with the obvious modifications.
\end{remark}

\subsection{Case $p=q$. Proof of Theorem \ref{aplicacion.2}}

Again, by Theorem \ref{teoMP} it remains to show the existence of a test function $v\in \D^{s,p}(\R^n)$ such that \eqref{CCPCond} holds. Recall that in this case we are assuming that $K(x)=1$.

The idea behind this construction goes back to the seminal paper by Brezis and Nirenberg \cite{BN}. The main difficulty here is the fact that the explicit form of the extremal for the Sobolev embedding $\D^{s,p}(\R^n)\subset L^{p^*_s}(\R^n)$ is not know.

It is conjectured that this extremals are  of the form
$$
V_{\ve, x_0}(x) := \ve^{-\frac{n-sp}{p}} V\left(\frac{x-x_0}{\ve}\right),\qquad V(x) = \left(\frac{1}{1+|x|^{p'}}\right)^\frac{n-sp}{p},
$$
which are the natural extensions of the standard bubbles for the embedding $\D^{1,p}(\R^n)\subset L^{p^*}(\R^n)$.

This conjecture is only known to be true in the case $p=2$. See \cite{Lieb}.

In the general case, it remains open, but nonetheless, what is known (see \cite{BMS} for a thorough study of this problem) is that there exists an extremal $U\in\D^{s,p}(\R^n)$ for \eqref{S} and that this extremal is radial and behaves like $V$ at infinity. More precisely, it is shown in \cite{BMS} that there exists universal constants $c_1, c_2>0$ such that
\begin{equation}\label{c12}
c_1 V(x)\le U(x)\le c_2 V(x),\qquad \text{for } |x|\ge 1.
\end{equation}

Once these observations are made, the proof of Theorem \ref{aplicacion.2} is even simpler than the classical result of Brezis and Nirenberg, since we are working in the whole space and there is no need to truncate the extremal $U$. However, this approach gives a further restriction on the exponents, that is classical in the literature, that is $sp^2<n$. This restriction guarantees that $U\in L^p(\R^n)$.

So, we will show that $U_\ve$ verifies \eqref{CCPCond}, i.e.
$$
\sup_{t>0} \F_\lambda(t U_\ve) < \frac{s}{n} S^\frac{n}{sp},
$$
if $\ve>0$ is small enough, where 
$$
U_\ve(x) = \ve^{-\frac{n-sp}{p}} U\left(\frac{x-x_0}{\ve}\right),
$$
and $x_0\in \R^n$ is the point given in \eqref{h2}. Without loss of generality, we assume that $x_0=0$.

We can also assume that $U$ is normalized as
$$
S^\frac{n}{sp} = [U]_{s,p}^p = \|U\|_{p^*_s}^{p^*_s}
$$
and so
\begin{equation}\label{Uve}
S^\frac{n}{sp} = [U_\ve]_{s,p}^p = \|U_\ve\|_{p^*_s}^{p^*_s}.
\end{equation}
Therefore, using \eqref{Uve},
$$
\F_\lambda(t U_\ve) = \left(\frac{t^p}{p} - \frac{t^{p^*_s}}{p^*_s}\right) S^\frac{n}{sp} - \lambda\frac{t^p}{p}\int_{\R^n} h(x) U_\ve^p\, dx.
$$
But
$$
\int_{\R^n} h(x) U_\ve^p\, dx = \ve^{sp} \int_{\R^n} h(\ve x) U^p(x)\, dx = \ve^{sp}\left( h(0)\|U\|_p^p + \int_{\R^n} (h(\ve x) - h(0)) U^p(x)\, dx\right).
$$
So, if $sp^2<n$, \eqref{c12} implies that $U\in L^p(\R^n)$, hence, by \eqref{h2},
\begin{equation}\label{asintotico}
\int_{\R^n} h(x) U_\ve^p\, dx = \ve^{sp} h(0) \|U\|_p^p + o(\ve^{sp}).
\end{equation}

Therefore, straightforward computations show that
\begin{align*}
\sup_{t>0} \F_\lambda(t U_\ve) &= \frac{s}{n}\left(S - \lambda h(0)\|U\|_p^p S^{1-\frac{n}{sp}}\ve^{sp} + o(\ve^{sp})\right)^\frac{n}{sp} \\
&< \frac{s}{n} S^\frac{n}{sp},
\end{align*}
if $\ve>0$ is small enough. The proof is complete. \qed

\subsection{A refinement of Theorem \ref{aplicacion.2}}

The result of Theorem \ref{aplicacion.2} can be further generalized of we assume that the function $K$ in \eqref{MainEqu} is non constant, but reaches its maximum at some point $x_0$ and is flat enough near $x_0$.

In fact, we need to impose that there exists $\alpha^* = \alpha^*(n,p,s)$ such that
\begin{equation}\label{alpha}
|K(x)-K(x_0)|\le C |x-x_0|^\alpha,\quad \text{for some } \alpha>\alpha^*.
\end{equation}
In fact, from our computations it follows that we can take 
$$
\alpha^* = \frac{spn}{n-sp(p-1)}.
$$
Recall that since we are assuming that $sp^2<n$ if follows that $\alpha^*>0$.

To see this fact, is enough to prove that
\begin{equation}\label{Est.K}
\int_{\R^n} K(x) U_{\ve, x_0}(x)^{p^*_s}\, dx = K(x_0) S^\frac{n}{sp} + o(\ve^{sp}).
\end{equation}

This is the content of the next lemma.
\begin{lema}\label{lema.K}
Assume $sp^2<n$ and $K\in L^\infty(\R^n)\cap C(\R^n)$ be such there exists $x_0\in\R^n$ such that \eqref{alpha} holds.  Then \eqref{Est.K} holds true.
\end{lema}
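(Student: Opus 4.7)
The plan is to reduce \eqref{Est.K} to establishing
\[
R(\ve) := \int_{\R^n}\bigl(K(x)-K(x_0)\bigr)U_{\ve,x_0}(x)^{p^*_s}\,dx = o(\ve^{sp}),
\]
since the leading contribution $K(x_0)S^{n/(sp)}$ is produced by the split
$K(x) = K(x_0) + (K(x)-K(x_0))$ together with the scale-invariance
$\int U_{\ve,x_0}^{p^*_s}\,dx = \int U^{p^*_s}\,dy = S^{n/(sp)}$ (which follows from the normalization $\|U\|_{p^*_s}^{p^*_s}=S^{n/(sp)}$ and the identity $(n-sp)p^*_s/p = n$). Translating so that $x_0=0$ and rescaling via $y=x/\ve$, it suffices to show
\[
R(\ve) = \int_{\R^n}\bigl(K(\ve y)-K(0)\bigr)U(y)^{p^*_s}\,dy = o(\ve^{sp}).
\]

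The heart of the argument is to split $R(\ve)$ at a threshold $|y|=T=T(\ve)$ with $T\to\infty$ but $\ve T\to 0$, so that \eqref{alpha} applies pointwise on the near ball. On $|y|\le T$, Hölder flatness yields
\[
\Bigl|\int_{|y|\le T}\bigl(K(\ve y)-K(0)\bigr)U^{p^*_s}\,dy\Bigr| \le C\ve^\alpha\int_{|y|\le T}|y|^\alpha U(y)^{p^*_s}\,dy.
\]
On the complement one uses the crude bound $|K(\ve y)-K(0)|\le 2\|K\|_\infty$ combined with the sharp decay \eqref{c12}, which gives $U(y)^{p^*_s}\le C|y|^{-np/(p-1)}$ for $|y|\ge 1$; polar integration then yields
\[
\Bigl|\int_{|y|>T}\bigl(K(\ve y)-K(0)\bigr)U^{p^*_s}\,dy\Bigr| \le C T^{-n/(p-1)}.
\]
The hypothesis $sp^2<n$ forces $n/(p-1) > sp$, so this far piece is automatically $o(\ve^{sp})$ as soon as $T$ grows like a sufficiently negative power of $\ve$, e.g.\ for $T\sim \ve^{-1}$.

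The delicate point is the near integral: when $\alpha > n/(p-1)$ the weight $|y|^\alpha$ overcomes the decay of $U^{p^*_s}$ and forces $\int_{|y|\le T}|y|^\alpha U^{p^*_s}\,dy \sim T^{\alpha-n/(p-1)}$ to blow up with $T$. One must then balance this growth against the $\ve^\alpha$ prefactor by choosing $T=T(\ve)$ optimally, and the resulting trade-off is precisely what generates the threshold $\alpha^* = spn/(n-sp(p-1))$ quoted before the lemma. I expect this balancing step to be the main technical obstacle; once $T$ is fixed, the assumption $\alpha > \alpha^*$ (slightly stronger than the naive requirement $\alpha > sp$ dictated by the easy regime $\alpha<n/(p-1)$) ensures both pieces of $R(\ve)$ decay strictly faster than $\ve^{sp}$, completing \eqref{Est.K}.
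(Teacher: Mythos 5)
Your proof follows the same split-and-balance structure as the paper's: reduce to $R(\ve)=\int_{\R^n}(K(\ve y)-K(0))\,U(y)^{p^*_s}\,dy$ after translating $x_0$ to the origin and rescaling, split at a radius $T=T(\ve)$, bound the far part by $C\,T^{-n/(p-1)}$ using $K\in L^\infty$ and \eqref{c12}, and bound the near part using the H\"older-flatness hypothesis. The difference lies in how the near part is estimated. The paper uses the uniform bound $|K(\ve y)-K(0)|\le C(\ve T)^\alpha$ on $\{|y|\le T\}$ and then $\int U^{p^*_s}= S^{n/sp}$, giving $I\le C(\ve T)^\alpha S^{n/sp}$; balancing $(\ve T)^\alpha$ against $T^{-n/(p-1)}$ is exactly what produces $\alpha^*=spn/(n-sp(p-1))$. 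You instead keep the weighted integral $\ve^\alpha\int_{|y|\le T}|y|^\alpha U^{p^*_s}\,dy$, which is genuinely sharper --- but, contrary to your closing remark, this refinement does \emph{not} reproduce $\alpha^*$. Tracing your own estimates: if $\alpha<n/(p-1)$ the weighted integral is bounded independently of $T$, so the near piece is $O(\ve^\alpha)$ and you only need $\alpha>sp$; if $\alpha>n/(p-1)$ it grows like $T^{\alpha-n/(p-1)}$, the balance $T\sim\ve^{-1}$ makes both pieces $O(\ve^{n/(p-1)})$, and $sp^2<n$ already forces $n/(p-1)>sp$ with no further restriction on $\alpha$. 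So your approach proves \eqref{Est.K} under the weaker hypothesis $\alpha>sp$ (modulo a logarithm at $\alpha=n/(p-1)$), i.e.\ it overshoots the paper's claimed optimality; since $\alpha^*>sp$ the stated hypothesis $\alpha>\alpha^*$ still closes your argument, so the proof is correct --- but the threshold $\alpha^*$ comes from the paper's cruder near-bound, not from the trade-off you describe.
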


\begin{proof}
The proof is rather standard. Without loss of generality we can assume that $x_0=0$.
\begin{align*}
\int_{\R^n} K(x) U_\ve(x)^{p^*_s}\, dx &= \int_{\R^n} K(\ve x) U(x)^{p^*_s}\, dx\\
&= K(0) S^\frac{n}{sp} + \int_{\R^n} (K(\ve x) - K(0)) U(x)^{p^*_s}\, dx.
\end{align*}
So it suffices to show that
$$
\int_{\R^n} |K(\ve x) - K(0)| U(x)^{p^*_s}\, dx = o(\ve^{sp}).
$$
As usual, we split the integral for small and large values of $|x|$. For that purpose, we take $R>0$ to be defined later and write
$$
\int_{\R^n} |K(\ve x) - K(0)| U(x)^{p^*_s}\, dx = \left(\int_{|x|\le R} + \int_{|x|>R}\right)|K(\ve x) - K(0)| U(x)^{p^*_s}\, dx  = I + II.
$$
For $I$ we use \eqref{Est.K} and obtain
$$
I\le C (\ve R)^\alpha S^\frac{n}{sp}.
$$
For $II$, we just use the $L^\infty$ bound of $K$ and \eqref{c12} to obtain
$$
II\le C\int_{|x|>R} V(x)^{p^*_s}\, dx \le C R^{-\frac{n}{p-1}}.
$$
Now, optimizing on $R$, we take 
$$
R=\ve^{-\frac{\alpha(p-1)}{\alpha(p-1) + n}},
$$
obtaining
$$
\int_{\R^n} |K(\ve x) - K(0)| U(x)^{p^*_s}\, dx\le C \ve^\frac{\alpha n}{\alpha(p-1) + n}.
$$
From this estimate, we obtain the desired result once observed that  $\frac{\alpha n}{\alpha(p-1) + n}>sp$ if and only if $\alpha>\alpha^*.$
\end{proof}

As a corollary, we obtain the next result

\begin{teo}\label{aplicacion.2.5}
Let $0<s<1<p<\infty$ be such that $sp^2<n$. 
Assume moreover that $K\in L^\infty(\R^n)\cap C(\R^n)$ verifies \eqref{alpha} and reaches its maximum at some point $x_0\in\R^n$ for which the function $h$ satisfies \eqref{h2}. Assume also that $h$  satisfies \eqref{h1} with $q=p$. 

Then, \eqref{MainEqu} has a nontrivial solution for any $0<\lambda<\lambda_1(h)$, where $\lambda_1(h)$ is given by \eqref{eq.lambda1}.
\end{teo}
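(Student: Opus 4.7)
The plan is to follow verbatim the scheme of the proof of Theorem \ref{aplicacion.2}, with the only modification being the asymptotic estimate of the critical term, which is where the assumption \eqref{alpha} enters via Lemma \ref{lema.K}. By the existence criterion Theorem \ref{teoMP}, and since the hypothesis $0 < \lambda < \lambda_1(h)$ together with \eqref{h1} (case $q = p$) already guarantees the mountain--pass geometry of $\F_\lambda$ as shown at the end of the proof of Theorem \ref{teoMP}, it suffices to produce a test function $v \in \D^{s,p}(\R^n)$ satisfying
$$
\sup_{t>0} \F_\lambda(tv) < \frac{s}{n} S^{\frac{n}{sp}} \|K\|_\infty^{1-\frac{n}{sp}} = \frac{s}{n} S^{\frac{n}{sp}} K(x_0)^{1-\frac{n}{sp}},
$$
where the last equality uses that $K$ attains its maximum at $x_0$. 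As in Theorem \ref{aplicacion.2}, I would choose $v = U_{\ve,x_0}$, with $U$ a radial extremal for \eqref{S} normalized so that $[U]_{s,p}^p = \|U\|_{p^*_s}^{p^*_s} = S^{\frac{n}{sp}}$; note that the hypothesis $sp^2 < n$, together with the decay \eqref{c12}, guarantees $U \in L^p(\R^n)$, which is essential.

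For such a choice, the Gagliardo term is scale invariant, $[U_{\ve,x_0}]_{s,p}^p = S^{\frac{n}{sp}}$, by Lemma \ref{scaling}. The subcritical term is handled exactly as in \eqref{asintotico}: changing variables $y = (x-x_0)/\ve$ and using continuity of $h$ at $x_0$ together with $U \in L^p(\R^n)$, one gets
$$
\int_{\R^n} h(x)\, U_{\ve,x_0}(x)^p\, dx = \ve^{sp} h(x_0) \|U\|_p^p + o(\ve^{sp}).
$$
The new ingredient, absent in Theorem \ref{aplicacion.2}, is the expansion of the critical term. This is precisely what Lemma \ref{lema.K} provides at the point $x_0$ where \eqref{alpha} is assumed:
$$
\int_{\R^n} K(x)\, U_{\ve,x_0}(x)^{p^*_s}\, dx = K(x_0) S^{\frac{n}{sp}} + o(\ve^{sp}).
$$

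The final step is a direct optimization. Writing $\F_\lambda(t U_{\ve,x_0}) = \frac{t^p}{p} A_\ve - \frac{t^{p^*_s}}{p^*_s} B_\ve$ with $A_\ve = S^{\frac{n}{sp}} - \lambda \ve^{sp} h(x_0) \|U\|_p^p + o(\ve^{sp})$ and $B_\ve = K(x_0) S^{\frac{n}{sp}} + o(\ve^{sp})$, the elementary identity
$$
\sup_{t>0} \left(\frac{A t^p}{p} - \frac{B t^{p^*_s}}{p^*_s}\right) = \frac{s}{n}\, A^{\frac{n}{sp}} B^{1-\frac{n}{sp}}, \qquad A,B>0,
$$
combined with a first order Taylor expansion in $\ve^{sp}$, yields
$$
\sup_{t>0} \F_\lambda(t U_{\ve,x_0}) = \frac{s}{n} S^{\frac{n}{sp}} K(x_0)^{1-\frac{n}{sp}} - \frac{\lambda h(x_0) \|U\|_p^p}{p} K(x_0)^{1-\frac{n}{sp}} \ve^{sp} + o(\ve^{sp}).
$$
Since $h(x_0) > 0$ by \eqref{h2}, $\lambda > 0$ and $K(x_0) = \|K\|_\infty > 0$, the coefficient of $\ve^{sp}$ is strictly negative, so condition \eqref{CCPCond} is satisfied for all sufficiently small $\ve$, and Theorem \ref{teoMP} provides the desired nontrivial solution. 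No serious obstacle arises: the only delicate point is the expansion of the critical term, which is exactly the content of Lemma \ref{lema.K} and is why the flatness exponent $\alpha^*$ was introduced.
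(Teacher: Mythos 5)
Your proposal is correct and follows the same approach the paper takes: the paper presents Theorem~\ref{aplicacion.2.5} as a corollary of Lemma~\ref{lema.K}, which supplies the expansion \eqref{Est.K}, and then invokes the scheme of the proof of Theorem~\ref{aplicacion.2} with the constant $K$ replaced by $K(x_0)=\|K\|_\infty$. Your spelling-out of the elementary optimization $\sup_{t>0}\bigl(\tfrac{A t^p}{p}-\tfrac{B t^{p^*_s}}{p^*_s}\bigr)=\tfrac{s}{n}A^{n/sp}B^{1-n/sp}$ and the first-order Taylor expansion matches the computation implicit in the proof of Theorem~\ref{aplicacion.2}.
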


\subsection{Proof of Theorem \ref{aplicacion.infty}}

Take $x_\ve\in\R^n$ be such that $|x_\ve|\to\infty$ as $\ve\to 0$.

The proof follows exactly as the one of Theorem \ref{aplicacion.2} with test function given by $U_{\ve,x_\ve}$. So, in order to conclude we need to estimate
$$
\int_{\R^n} K(x) U_{\ve, x_\ve}(x)^{p^*_s}\, dx \quad \text{and} \quad \int_{\R^n} h(x) U_{\ve,x_\ve}(x)^p\, dx.
$$
Changing variables, we get
\begin{equation}\label{est.h.infty}
\int_{\R^n} h(x) U_{\ve,x_\ve}(x)^p\, dx = \ve^{sp}\int_{\R^n} h(x_\ve + \ve x) U(x)^p\, dx\ge A \frac{\ve^{sp}}{(2|x_\ve|)^\gamma} \int_{B_1(0)} U^p\, dx
\end{equation}
and
\begin{align*}
\int_{\R^n} K(x) U_{\ve, x_\ve}(x)^{p^*_s}\, dx &= \int_{\R^n} K(x_\ve + \ve x) U(x)^{p^*_s}\, dx\\
&= K(\infty) S^\frac{n}{sp} + \int_{\R^n} (K(x_\ve + \ve x) - K(\infty)) U(x)^{p^*_s}\, dx.
\end{align*}
Now we need to control the integral on the right hand side. So, as in the proof of Theorem \ref{aplicacion.2.5}, we take $R>0$ to be chosen later and write
\begin{align*}
\int_{\R^n} |K(x_\ve + \ve x) - K(\infty)| U(x)^{p^*_s}\, dx &= \left(\int_{|x|\le R} + \int_{|x|>R}\right) |K(x_\ve + \ve x) - K(\infty)| U(x)^{p^*_s}\, dx\\
&= I+ II.
\end{align*}
To bound $I$ we use \eqref{K.infty} and obtain
$$
I\le \frac{C}{(|x_\ve| - \ve R)^\beta} S^\frac{n}{sp}.
$$
To bound $II$ we use the $L^\infty$ bound of $K$ and get
$$
II\le C \int_{|x|>R} V^{p^*_s}\, dx \le C R^{-\frac{n}{p-1}}.
$$
So, if we take $R=|x_\ve|$ we obtain
$$
\int_{\R^n} |K(x_\ve + \ve x) - K(\infty)| U(x)^{p^*_s}\, dx \le \frac{C}{|x_\ve|^\beta} + \frac{C}{|x_\ve|^\frac{n}{p-1}}.
$$

Finally, if we take 
$$
|x_\ve| \gg \max\{\ve^{-\frac{sp}{\beta-\gamma}}, \ve^{-\frac{sp}{\frac{n}{p-1}-\gamma}}\}
$$
we arrive at
$$
\int_{\R^n} K(x) U_{\ve,x_\ve}(x)^{p^*_s}\, dx = K(\infty) S^\frac{n}{sp} + o\left(\frac{\ve^{sp}}{|x_\ve|^\gamma}\right). 
$$

This estimate, along with \eqref{est.h.infty} allows us to conclude the proof as in Theorem \ref{aplicacion.2}.\qed

\section*{Acknowledgements}

This paper was supported by grants UBACyT 20020130100283BA, CONICET PIP 11220150100032CO and ANPCyT PICT 2012-0153.

The authors are members of CONICET.

\bibliography{biblio}
\bibliographystyle{plain}

\end{document}